\numberwithin{equation}{section}
\numberwithin{figure}{section}
\theoremstyle{plain}
\newtheorem{thm}{\protect\theoremname}
\theoremstyle{definition}
\newtheorem{defn}[thm]{\protect\definitionname}
\theoremstyle{plain}
\newtheorem{prop}[thm]{\protect\propositionname}
\theoremstyle{remark}
\newtheorem{rem}[thm]{\protect\remarkname}
\theoremstyle{plain}
\newtheorem{lem}[thm]{\protect\lemmaname}
\theoremstyle{definition}
\newtheorem{example}[thm]{\protect\examplename}
\theoremstyle{plain}
\newtheorem{cor}[thm]{\protect\corollaryname}
\renewcommand*{\epsilon}{\varepsilon}
\def\matrixobject@{%
  \edef \next@{={\DirectionfromtheDirection@ }}%
  \expandafter \toks@ \next@ \plainxy@
  \let\xy@@ix@=\xyq@@toksix@
  \xyFN@ \OBJECT@}
\let\xy@entry@@norm=\entry@@norm
\def\entry@@norm@patched{%
  \let\object@=\matrixobject@
  \xy@entry@@norm }
\newcommand{\twocong}[2][0.5]{\ar@{}[#2] \save ?(#1)*{\cong}\restore}
\newcommand{\twoeq}[2][0.5]{\ar@{}[#2] \save ?(#1)*{=}\restore}
\newcommand{\ltwocell}[3][0.5]{\ar@{}[#2] \ar@{=>}?(#1)+/r 0.15cm/;?(#1)+/l 0.15cm/^{#3}}
\newcommand{\rtwocell}[3][0.5]{\ar@{}[#2] \ar@{=>}?(#1)+/l 0.15cm/;?(#1)+/r 0.15cm/^{#3}}
\newcommand{\utwocell}[3][0.5]{\ar@{}[#2] \ar@{=>}?(#1)+/d  0.15cm/;?(#1)+/u 0.15cm/_{#3}}
\newcommand{\dtwocell}[3][0.5]{ \ar@{}[#2] { \ar@{=>}?(#1)+/u  0.15cm/;?(#1)+/d 0.15cm/^{#3}}}
\newcommand{\ultwocell}[3][0.5]{\ar@{}[#2] \ar@{=>}?(#1)+/dr  0.15cm/;?(#1)+/ul 0.15cm/^{#3}}
\newcommand{\urtwocell}[3][0.5]{\ar@{}[#2] \ar@{=>}?(#1)+/dl  0.15cm/;?(#1)+/ur 0.15cm/^{#3}}
\newcommand{\dltwocell}[3][0.5]{\ar@{}[#2] \ar@{=>}?(#1)+/ur  0.15cm/;?(#1)+/dl 0.15cm/^{#3}}
\newcommand{\drtwocell}[3][0.5]{\ar@{}[#2] \ar@{=>}?(#1)+/ul  0.15cm/;?(#1)+/dr 0.15cm/^{#3}}
\newcommand{\myar}[2]{\ar^-{#1}[#2]}
\newcommand{\myard}[2]{\ar_-{#1}[#2]}
  \def\make@df@tag@@#1{%
    \gdef\df@tag{%
      \maketag@@@{\Hy@make@anchor#1}%
      \def\@currentlabel{#1}%
      \def\cref@currentlabel{[equation][2147483647][]#1}%
    }%
  }
  \def\make@df@tag@@@#1{%
    \gdef\df@tag{%
      \tagform@{\Hy@make@anchor#1}%
      \toks@\@xp{\p@equation{#1}}%
      \edef\@currentlabel{\the\toks@}%
      \edef\cref@currentlabel{[equation][2147483647][]\the\toks@}
    }%
  }
\numberwithin{thm}{subsection}
\providecommand{\corollaryname}{Corollary}
\providecommand{\definitionname}{Definition}
\providecommand{\examplename}{Example}
\providecommand{\lemmaname}{Lemma}
\providecommand{\propositionname}{Proposition}
\providecommand{\remarkname}{Remark}
\providecommand{\theoremname}{Theorem}
\begin{document}
\subjclass[2020]{18N10}
\title{Terms and Derivatives of Polynomial functors via Negation}
\author{Charles Walker}
\address{Department of Mathematics and Statistics, Masaryk University, Kotl{\'a}{\v r}sk{\'a}
2, Brno 61137, Czech Republic }
\email{\tt{charles.walker.math@gmail.com}}
\keywords{polynomial functors}
\begin{abstract}
Given a locally cartesian closed category $\mathcal{E}$, a polynomial
$\left(s,p,t\right)$ may be defined as a diagram consisting of three
arrows in $\mathcal{E}$ of a certain shape. In this paper we define
the homogeneous and monomial terms comprising a polynomial $\left(s,p,t\right)$
and give sufficient conditions on $\mathcal{E}$ such that the homogeneous
terms of polynomials exist. We use these homogeneous terms to exhibit
an infinite family of coreflection adjunctions between polynomials
and homogeneous polynomials of order $n$.

We show that every locally cartesian closed category $\mathcal{E}$
with a strict initial object admits a negation operator and a $\left(\textnormal{dense},\textnormal{closed}\right)$
orthogonal factorization system. We see that both terms and derivatives
of polynomial functors are constructed from this negation operator,
and that if one takes the localization of $\mathcal{E}$ by the class
$\mathcal{W}$ of dense monomorphisms, then derivatives of all polynomial
functors exist.

All results are shown formally using only the theory of extensive
categories and distributivity pullbacks. 
\end{abstract}

\maketitle
\tableofcontents{}

\section{Introduction}

In a locally cartesian closed category $\mathcal{E}$, a polynomial
is defined as a diagram of the form 
\[
\xymatrix@R=0.5em{ & E\ar[r]^{p}\ar[ld]_{s} & B\ar[rd]^{t}\\
I &  &  & J
}
\]
which yields an associated polynomial functor given by

\[
\xymatrix{\mathcal{E}/I\ar[r]^{\Delta_{s}} & \mathcal{E}/E\ar[r]^{\Pi_{p}} & \mathcal{E}/B\ar[r]^{\Sigma_{t}} & \mathcal{E}/J}
\]
defined by the assignation
\[
\left(X_{i}\colon i\in I\right)\mapsto\left(\sum_{b\in B_{j}}\Pi_{e\in E_{B}}X_{s\left(e\right)}:j\in J\right)
\]
When $\mathcal{E}=\mathbf{Set}$ one may think of $I$ as the set
of input variables, and $J$ as the number of outputs. In the simpler
case in which $I=J=\mathbf{1}$ is the terminal object, a polynomial
is simply a morphism $p\colon E\to B$, and the corresponding polynomial
functor is
\[
\mathcal{E\to\mathcal{E}\colon}X\mapsto\sum_{b\in B}X^{E_{b}}
\]
which is a familiar polynomial in one variable, involving one input
and one output. In the category of sets, it is simple to read off
the terms of the polynomial. For example the coefficient of the $X^{2}$
term is the number of $p$-fibres of size $2$. More generally, we
wish to know how to read off the terms of a polynomial in a general
locally cartesian closed category $\mathcal{E}$, for which we must
first know the definition of a term, and if terms of polynomials even
exist in this general case.

To do this, we first observe that certain constructions on polynomial
functors are defined as terminal diagrams of a certain form. An example
of this, due to Weber \cite{weber}, is that the composite of two
polynomial functors is defined as the terminal diagram of the form
below
\[
\xymatrix{ & X\ar[d]\ar[rr] & \ar@{}[ld]|-{\textnormal{pb}} & Y\ar[rd]\ar[rr]\ar[ld] & \ar@{}[dr]|-{\textnormal{pb}} & Z\ar[d]\\
 & E\ar[r]^{p}\ar[ld]_{s} & B\ar[rd]^{t} &  & S\ar[r]^{q}\ar[ld]_{u} & T\ar[rd]^{v}\\
I &  &  & J &  &  & K
}
\]
meaning any other such diagram is equal to the above with a unique
pullback placed on top. In a similar fashion, we will define the order
$F$ homogeneous term of a polynomial $\left(s,p,t\right)$ to be
the terminal diagram of the form
\[
\xymatrix{ & F\times\overline{B}\myar{\pi_{2}}{r}\ar[d]\ar@/_{1pc}/[dld]\ar@{}[dr]|-{\textnormal{pb}} & \overline{B}\ar[d]\ar@/^{1pc}/[ddr]\\
 & E\ar[r]^{p}\ar[ld]_{s} & B\ar[rd]^{t}\\
I &  &  & J
}
\]
and monomial terms may also be defined in a similar way. To construct
these terms of polynomials we assume $\mathcal{E}$ has disjoint finite
sums, and define the negation of a map $p\colon E\to B$ to be the
map $\neg p:=\Pi_{p}\mathbf{0}$. This map may also be viewed as the
component of the distributivity pullback below
\[
\xymatrix{\mathbf{0}\ar[rr]^{\Delta_{\neg p}p}\ar@{=}[d] &  & B\setminus\textnormal{im}p\ar[dd]^{\neg p}\\
\mathbf{0}\ar[d]_{!}\\
E\ar[rr]_{p} &  & B
}
\]
This negation operator then allows us to construct the order $n$
homogeneous terms of polynomials. The simplest example of the negation
operator yielding terms of polynomials is that the \emph{constant
term} of a polynomial $\left(s,p,t\right)$ is given by
\[
\xymatrix@R=0.5em{ & \mathbf{0}\myar{\Delta_{\neg p}p}{rr}\ar[ld]_{s} &  & B\setminus\textnormal{im}p\myar{t}{rd}\\
I &  &  &  & J
}
\]

We will also show that this negation operator yields a dense-closed
orthogonal factorization system on any locally cartesian closed $\mathcal{E}$
with a strict initial object (weaker axioms that that of a topos in
which dense-closed factorizations are normally considered). 

We then construct the localization of $\mathcal{E}$ by the class
$\mathcal{W}$ of dense monomorphisms, showing that $\mathcal{E}\left[\mathcal{W}^{-1}\right]$
is locally cartesian closed, and that in this localization $\mathcal{E}\left[\mathcal{W}^{-1}\right]$
the morphisms satisfy certain decidability conditions. 

Taking $I=J=\mathbf{1}$ for simplicity, we see that this localization
has the advantage that derivatives of all polynomial functors exist.
Indeed for a polynomial
\[
\xymatrix{\mathbf{1} & E\ar[r]^{p}\ar[l] & B\ar[r] & \mathbf{1}}
\]
 the derivative is given by the polynomial
\[
\xymatrix@R=0.5em{ & E\boxtimes_{B}E\ar[r]^{\neg\delta_{p}}\ar[ld] & E\times_{B}E\myar{\pi_{1}}{r} & E\ar[dr]\\
\mathbf{1} &  &  &  & \mathbf{1}
}
\]
where we have taken the negation of the diagonal $\delta_{p}$. The
notion of the derivative of a polynomial functor is due to McBride
\cite{McBride}, however that was written for computer scientists
(and did not use the localization). In this paper we express all definitions,
proofs and results purely categorically, thus making this theory more
accessible for category theorists.

\section{Background}

In this section we recall the basic facts about distributivity pullbacks
and extensive categories. These facts will be widely used throughout
the paper.

\subsection{Distributivity pullbacks}

In order to study polynomial functors in a category $\mathcal{E}$,
we will require that $\mathcal{E}$ has some structure. First and
foremost, we will require the following property on $\mathcal{E}$.
\begin{defn}
A category $\mathcal{E}$ with pullbacks is said to be \emph{locally
cartesian closed} if for every morphism $p\colon E\to B$ in $\mathcal{E}$,
the 'pullback along $p$' functor $\Delta_{p}\colon\mathcal{E}/B\to\mathcal{E}/E$
has a right adjoint denoted $\Pi_{p}\colon\mathcal{E}/E\to\mathcal{E}/B$.
\end{defn}

Note, as in the thesis of von Glen \cite{glehn}, that a category
is locally cartesian closed when one has a pseudo-distributive law
between the pseudomonads for fibrations with sums and products. Given
that the existence of the following type of pullbacks is equivalent
to the existence of this pseudo-distributive law, is is reasonable
to use the name ``distributivity pullbacks'' given by Weber \cite{weber}.
\begin{defn}
Suppose we are given morphisms $p\colon E\to B$ and $u\colon A\to E$
in $\mathcal{E}$, the distributivity pullback (dpb) around $u$ and
$p$ is defined as the terminal pullback around $u$ and $p$, meaning
that given any other pullback around $u$ and $p$ as on the left
below

\[
\xymatrix{X'\ar[rr]\ar[dd]\ar@{}[rdddr]|-{\textnormal{pb}} &  & Y'\ar[ddd] &  & X'\ar[rr]\ar@{..>}[d]\ar@{}[rrd]|-{\textnormal{pb}} &  & Y'\ar@{..>}[d]\\
 &  &  & \ar@{}[d]|-{=} & X\ar[rr]\ar[d]\ar@{}[rrdd]|-{\textnormal{dpb}} &  & Y\ar[dd]\\
A\ar[d]_{u} &  &  & \; & A\ar[d]_{u}\\
E\ar[rr]_{p} &  & B &  & E\ar[rr]_{p} &  & B
}
\]
there exists a unique pullback square as on the right above yielding
a factorization through the distributivity pullback. 
\end{defn}

\begin{thm}
\cite{weber} A category $\mathcal{E}$ with pullbacks is locally
cartesian closed if and only if all it has all distributivity pullbacks.
\end{thm}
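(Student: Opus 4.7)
The plan is to establish the equivalence via a natural correspondence: pullback squares around $(u,p)$ with fixed right edge $\varepsilon'\colon Y'\to B$ should correspond bijectively to morphisms $\Delta_p(Y',\varepsilon')\to(A,u)$ in $\mathcal{E}/E$. Once that correspondence is in place, both implications drop out, since under the locally cartesian closed assumption the very same data classifies morphisms $(Y',\varepsilon')\to\Pi_p(A,u)$ in $\mathcal{E}/B$.

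For the direction ``locally cartesian closed $\Rightarrow$ distributivity pullbacks exist'', I would set $(Y,\varepsilon):=\Pi_p(A,u)$ and let $X\to A$ over $E$ be the counit of $\Delta_p\dashv\Pi_p$ evaluated at $(A,u)$. Since $\Delta_p(Y,\varepsilon)=(E\times_B Y,\pi_1)$, taking $X:=E\times_B Y$ automatically produces a pullback square around $(u,p)$. Terminality would be verified by taking any competing pullback $X'\to Y'\to B$ with induced $X'\to A$ over $E$: this latter map is a morphism $\Delta_p(Y',\varepsilon')\to(A,u)$ in $\mathcal{E}/E$, which transposes across the adjunction to a unique $(Y',\varepsilon')\to(Y,\varepsilon)$ in $\mathcal{E}/B$. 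Pulling this map back along $p$ and invoking naturality of the counit recovers the required top pullback square factoring the competitor through the dpb.

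For the converse, I would define $\Pi_p\colon\mathcal{E}/E\to\mathcal{E}/B$ on objects by letting $\Pi_p(A,u)=(Y,\varepsilon)$ be the right edge of the dpb around $(u,p)$. Given a morphism $f\colon(A,u)\to(A',u')$ in $\mathcal{E}/E$, functoriality is obtained by applying the terminal property of the dpb for $(u',p)$ to the pullback square built from the dpb for $(u,p)$ postcomposed by $f$ at the top-left corner; this yields a canonical $\Pi_p(f)\colon Y\to Y'$ over $B$. The adjunction isomorphism
\[
\mathcal{E}/E\bigl(\Delta_p(Y',\varepsilon'),(A,u)\bigr)\;\cong\;\mathcal{E}/B\bigl((Y',\varepsilon'),\Pi_p(A,u)\bigr)
\]
is then precisely a repackaging of the terminal property: the left side enumerates pullback squares around $(u,p)$ with right edge $\varepsilon'$ equipped with a map from the top-left to $A$ over $E$, and the right side indexes these by their universal factorization through the dpb. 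Naturality in both variables is forced by the uniqueness clause of the terminal property.

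The main obstacle is the backward direction, specifically verifying that $\Pi_p$ is a genuine functor rather than merely an object-level assignment with hom-set bijections: composites of morphisms in $\mathcal{E}/E$ must produce composite comparison maps on right edges, and identities must go to identities. Both reduce to uniqueness of factorizations through a given dpb, which in turn uses that the top square of the dpb is itself a pullback. The forward direction is essentially formal once the counit is identified with the top-left edge of the dpb, so the real content is that the terminal property of the dpb packages up exactly the right adjoint data for $\Delta_p$ evaluated at a single object.
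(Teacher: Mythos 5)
Your proposal is correct and follows essentially the same route the paper (and Weber's cited proof) takes: the paper offers no proof of its own beyond the citation, but its very next proposition identifies the distributivity pullback around $u$ and $p$ with $\Pi_{p}u$ together with the counit $\epsilon_{u}$, which is exactly your key correspondence between pullbacks around $(u,p)$ with right edge $\varepsilon'$ and morphisms $\Delta_{p}(Y',\varepsilon')\to(A,u)$ in $\mathcal{E}/E$. Both directions then unwind the terminal property of the dpb as the universal property of the adjunction $\Delta_{p}\dashv\Pi_{p}$, just as you describe.
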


We will now recall a number of useful facts about distributivity pullbacks.
The first explains the above theorem.
\begin{prop}
\cite{weber} In a locally cartesian closed category $\mathcal{E}$,
the distributivity pullback around $u$ and $p$ is given by 
\[
\xymatrix{X\ar[rr]\ar[d]_{\epsilon_{u}}\ar@{}[rrdd]|-{\textnormal{dpb}} &  & Y\ar[dd]^{\Pi_{p}u}\\
A\ar[d]_{u}\\
E\ar[rr]_{p} &  & B
}
\]
where $\Pi_{p}$ is the right adjoint to pulling back along $p$,
and $\epsilon_{u}\colon\Delta_{p}\Pi_{p}u\to u$ is the counit component.
\end{prop}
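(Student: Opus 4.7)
The plan is to leverage the adjunction $\Delta_p \dashv \Pi_p$ to translate ``pullbacks around $u$ and $p$'' into morphisms in the slice category $\mathcal{E}/B$, so that terminality becomes a formal consequence of terminality in a slice.

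First I would unpack the data of an arbitrary pullback around $u$ and $p$. Such a diagram is determined by an object $\phi \colon Y' \to B$ of $\mathcal{E}/B$ together with the chosen pullback of $p$ along $\phi$ (which, up to canonical isomorphism, is $\Delta_p \phi$ regarded as an object of $\mathcal{E}/E$) and a factorization of the resulting projection to $E$ through $u \colon A \to E$. This factorization is precisely a morphism $\Delta_p \phi \to u$ in $\mathcal{E}/E$, which by the adjunction $\Delta_p \dashv \Pi_p$ corresponds bijectively to a morphism $\phi \to \Pi_p u$ in $\mathcal{E}/B$.

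Next I would check that this correspondence extends to morphisms of pullbacks around $(u,p)$. A factorization of one such pullback through another, in the sense of the displayed diagram in the definition, is exactly a morphism $Y' \to Y''$ over $B$ whose pullback along $p$ is compatible with the factorizations into $A$; under the identification above this is equivalent to a morphism $\phi \to \phi''$ in $\mathcal{E}/B$ commuting with the transposed maps into $\Pi_p u$. Hence the category of pullbacks around $u$ and $p$ is equivalent to the slice $(\mathcal{E}/B)/\Pi_p u$.

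Finally, terminality is automatic: the terminal object of $(\mathcal{E}/B)/\Pi_p u$ is the identity on $\Pi_p u$, which transposes under the adjunction to the counit component $\epsilon_u \colon \Delta_p \Pi_p u \to u$. Taking $Y = \Pi_p u$, $X = \Delta_p \Pi_p u$, and the left vertical map $\epsilon_u$, the resulting pullback square is the distributivity pullback, as claimed. The most delicate step, which I would write out with care, is verifying that the bijection on objects is natural with respect to the somewhat intricate notion of morphism between pullbacks around $(u,p)$ (in particular, keeping straight the compatibilities over $A$, over $B$, and the ``pullback on top'' condition simultaneously); once this naturality is established, everything else follows formally from the universal property of the counit.
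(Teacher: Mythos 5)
Your argument is correct. Note that the paper itself gives no proof of this proposition---it is stated as background and attributed to Weber---so there is nothing internal to compare against; your proof is essentially the standard one from the cited source: transpose the factorization $\Delta_p\phi\to u$ across the adjunction $\Delta_p\dashv\Pi_p$ to identify pullbacks around $(u,p)$ with objects of $(\mathcal{E}/B)/\Pi_p u$, whose terminal object transposes to the counit. The one point you flag as delicate does go through: given a map $Y'\to Y$ over $B$, the induced square on top is automatically a pullback by the pasting lemma (both the bottom square and the composite rectangle are pullbacks over $p$), and uniqueness of the factorization follows since the top map $X'\to X$ is forced to be $\Delta_p$ of the bottom one, which in turn is pinned down by the universal property of the counit.
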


The following is the horizontal pasting lemma for distributivity pullbacks.
There is also a vertical lemma which is not used in this paper.
\begin{prop}
\cite{weber} In a locally cartesian closed category $\mathcal{E}$,
the distributivity pullback around $u$ and the composite $p_{2}\cdot p_{1}$
is constructed as as the diagram 
\[
\xymatrix{X\ar@{}[rd]|-{\textnormal{pb}}\ar[d]\ar[r] & Y\ar@{}[rddd]|-{\textnormal{dpb}}\ar[r]\ar[d] & Z\ar[ddd]\\
S\ar[r]\ar@{}[rdd]|-{\textnormal{dpb}}\ar[d] & T\ar[dd]\\
A\ar[d]_{u}\\
E\ar[r]_{p_{1}} & M\ar[r]_{p_{2}} & B
}
\]
\end{prop}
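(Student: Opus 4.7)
The plan is to verify the two defining properties of a distributivity pullback: that the constructed outer diagram is a pullback around $u$ and $p_{2}\cdot p_{1}$, and that it is terminal among such pullbacks.

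For the first property, I would apply the pullback pasting lemma twice. The top pullback $(X,Y,S,T)$, which is a pullback by construction, pasted vertically with the distributivity pullback square $(S,T,E,M)$ around $u$ and $p_{1}$ yields a pullback rectangle $(X,Y,E,M)$; pasting this horizontally with the distributivity pullback square $(Y,Z,M,B)$ around $T\to M$ and $p_{2}$ yields the desired pullback $(X,Z,E,B)$ over $p_{2}\cdot p_{1}$, with left leg factoring through $u$ as required.

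For the terminality property, start with an arbitrary pullback around $u$ and $p_{2}\cdot p_{1}$, presented by $(X',Y',E,B)$ with left leg $X'\to A\to E$. Set $Y'':=p_{2}^{*}Y'$; then by pullback pasting, $(Y'',Y',M,B)$ is a pullback over $p_{2}$ and $(X',Y'',E,M)$ is a pullback over $p_{1}$, the latter being a pullback around $u$ and $p_{1}$. The universal property of the first dpb yields unique compatible maps $X'\to S$ and $Y''\to T$ with $(X',Y'',S,T)$ a pullback factorisation. The pullback $(Y'',Y',M,B)$ equipped with the map $Y''\to T$ just obtained is then a pullback around $T\to M$ and $p_{2}$; the universal property of the second dpb yields unique compatible maps $Y''\to Y$ and $Y'\to Z$ with $(Y'',Y',Y,Z)$ a pullback factorisation. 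The pair $X'\to S$ and $X'\to Y''\to Y$ induces the required map $X'\to X$ via the defining pullback $X=S\times_{T}Y$, once we check $X'\to S\to T=X'\to Y\to T$, which follows from the two compatibilities just produced. One then verifies that $(X',Y',X,Z)$ is itself a pullback (obtained by pasting the pullback $(X',Y'',X,Y)$, which arises by pulling $(X,Y,S,T)$ back along $Y''\to Y$, with the pullback $(Y'',Y',Y,Z)$), and that pasting $(X',Y',X,Z)$ with the outer dpb square recovers the original pullback $(X',Y',E,B)$. Uniqueness of this factorisation follows from the uniqueness clauses in the two universal properties invoked.

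The main obstacle is the bookkeeping required to manage the three-dimensional prism of objects, pullback squares, and induced morphisms, ensuring that every induced map respects all of the projection legs to $S$, $T$, $A$, $E$, $M$ and $B$. Once the full diagram is drawn, each individual verification reduces to an application of the pullback pasting lemma to a correctly identified rectangle, together with the uniqueness clause in the universal property of each of the two constituent distributivity pullbacks.
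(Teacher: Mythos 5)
The paper offers no proof of this proposition: it is recalled verbatim from Weber with a citation, so there is nothing in-paper to compare against. Your argument is the standard proof of Weber's horizontal pasting lemma for distributivity pullbacks, and its outline is correct: the existence half (pasting the three squares into a single pullback around $u$ and $p_{2}p_{1}$) and the factorization half (pulling $Y'$ back along $p_{2}$ to get $Y''$, cancelling to obtain a pullback around $u$ and $p_{1}$, and feeding the two resulting squares to the two constituent distributivity pullbacks) are exactly as they should be, including the verification that $X'\to S\to T$ agrees with $X'\to Y''\to Y\to T$ before inducing $X'\to X$, and the identification of $(X',Y'',X,Y)$ as a pullback via $X'\cong S\times_{T}Y''\cong X\times_{Y}Y''$.

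The one step you assert rather than prove is uniqueness, and it is not quite immediate, because a competing factorization $(\alpha\colon X'\to X,\ \beta\colon Y'\to Z)$ does not come equipped with your intermediate object $Y''$. To reduce to the two uniqueness clauses you must manufacture it: form $\widetilde{Y}:=Y\times_{Z}Y'$, paste $(\widetilde{Y},Y',Y,Z)$ with the pullback $(Y,Z,M,B)$ to identify $\widetilde{Y}$ with $Y''=M\times_{B}Y'$ over $M$ and $Y'$, and check that $(X',\widetilde{Y},X,Y)$ is a pullback (again by $X'\cong X\times_{Z}Y'\cong X\times_{Y}\widetilde{Y}$). Only then do $(X',\widetilde{Y},S,T)$ and $(\widetilde{Y},Y',Y,Z)$ become factorizations through the first and second dpb respectively, whose uniqueness clauses pin down $\beta$, the map $X'\to S$, and the composite $X'\to Y$, hence $\alpha$ via the defining pullback $X=S\times_{T}Y$. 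Spelling this out would make the proof complete; as written it is a correct sketch with that final step compressed to a single sentence.
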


Finally, it is worth pointing out that distributivity pullbacks are
well behaved with respect to monomorphisms; as shown by the following
two results.
\begin{prop}
Let $\mathcal{E}$ be locally cartesian closed. Suppose we are given
a distributivity pullback diagram 
\[
\xymatrix{X\ar[rr]\ar[d]\ar@{}[rrdd]|-{\textnormal{dpb}} &  & Y\ar[dd]^{\Pi_{p}u}\\
A\ar[d]_{u}\\
E\ar[rr]_{p} &  & B
}
\]
it follows that:
\begin{enumerate}
\item if $u\colon A\to B$ is mono, then $\Pi_{p}u\colon Y\to B$ is also
mono;
\item if $p\colon E\to B$ is mono, then $\epsilon_{u}\colon X\to A$ is
invertible.
\end{enumerate}
\end{prop}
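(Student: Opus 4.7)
For $(1)$, the plan is to use that $\Pi_p$ is a right adjoint. Since the forgetful functor $\mathcal{E}/C\to\mathcal{E}$ preserves and reflects monomorphisms, a morphism $q\colon Q\to C$ is mono in $\mathcal{E}$ exactly when the associated arrow $(Q,q)\to 1_C=(C,1_C)$ is mono in the slice $\mathcal{E}/C$. In particular, $u\colon A\to E$ mono corresponds to the mono $(A,u)\to 1_E$ in $\mathcal{E}/E$. The right adjoint $\Pi_p$ preserves both monomorphisms and terminal objects, so applying it yields a mono $(Y,\Pi_p u)=\Pi_p(A,u)\to\Pi_p(1_E)=1_B$ in $\mathcal{E}/B$, whence $\Pi_p u$ is mono in $\mathcal{E}$.

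For $(2)$, the plan is to exhibit two pullbacks around $u,p$ and compare their unique factorizations through the terminal dpb. First, since $p$ is mono, the outer square with $u$ on the left, $1_A$ on top, $p$ on the bottom and $pu$ on the right is a pullback (the mono $p$ pulls back through itself along its own factorization of $pu$). This constitutes a pullback around $u,p$ whose top is $1_A$. The universal property of the dpb then produces unique maps $\alpha\colon A\to X$ and $\beta\colon A\to Y$ satisfying $\epsilon_u\alpha=1_A$, $h\alpha=\beta$, $\Pi_p u\cdot\beta=pu$, with the resulting top square a pullback; in particular $\alpha$ is already a section of $\epsilon_u$.

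To promote $\alpha$ to a two-sided inverse, I would construct a second pullback around $u,p$ whose outer square has $u\epsilon_u$ on the left, $1_X$ on top, $p$ on the bottom and $\Pi_p u\cdot h=p\cdot u\epsilon_u$ on the right, with the chosen map $X\to A$ being $\epsilon_u$. This outer square is again a pullback because the right-hand map factors through the mono $p$ as $u\epsilon_u$. By terminality this factorizes uniquely through the dpb. The key observation is that $h\colon X\to Y$ is mono, being a pullback of the mono $p$ along $\Pi_p u$; this allows one to check that both $(\gamma,\delta)=(1_X,h)$ and $(\gamma,\delta)=(\alpha\epsilon_u,\beta\epsilon_u)$ are valid factorizations. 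The triangle identities come directly from the first step, and the top-square pullback condition in each case reduces, via $h$ mono, to the standard identification of $X\times_Y X$ with $X$. Uniqueness of the factorization then forces $\alpha\epsilon_u=1_X$, and combined with $\epsilon_u\alpha=1_A$ this shows $\epsilon_u$ is invertible.

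The main obstacle will be verifying that $(\alpha\epsilon_u,\beta\epsilon_u)$ really does yield a pullback square on top of the dpb. This rests on $h$ being mono: the pullback of $h$ against $\beta\epsilon_u=h\alpha\epsilon_u$ then collapses to $X$ via the map $(\alpha\epsilon_u,1_X)$, matching the data of the candidate factorization and allowing the uniqueness principle to force $\alpha\epsilon_u=1_X$.
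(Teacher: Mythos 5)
Your proof of part (1) is correct and is essentially the paper's argument: the paper likewise observes that $\Pi_{p}$ is a right adjoint and hence preserves the limit characterization of monomorphisms (trivial kernel pair); your phrasing via preservation of monos and terminal objects in the slice, together with the fact that the forgetful functor from a slice preserves and reflects monos, fills in exactly the details the paper leaves implicit.

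For part (2) the paper gives no argument at all --- it simply cites Corollary 47 of the author's MRes thesis --- so your proof cannot be compared line by line, but it is correct and self-contained, working directly from the terminality clause in the paper's definition of distributivity pullback. The two steps check out: the square with $1_{A}$ on top, $u$ and $pu$ on the sides and the mono $p$ on the bottom is indeed a pullback around $u$ and $p$, so terminality yields a section $\alpha$ of $\epsilon_{u}$ with $h\alpha=\beta$ and the top square a pullback. For the retraction, your second pullback-around (with $1_{X}$ on top and $\epsilon_{u}\colon X\to A$ as the chosen leg) admits both $(1_{X},h)$ and $(\alpha\epsilon_{u},\beta\epsilon_{u})$ as factorizations through the dpb; the only delicate point is that both top squares are pullbacks, and this does follow from $h$ being mono (it is the pullback of the mono $p$ along $\Pi_{p}u$), since a commuting square over a mono whose parallel comparison map is forced to be the identity is automatically a pullback. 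Uniqueness then gives $\alpha\epsilon_{u}=1_{X}$. The payoff of your route is that the proposition becomes provable entirely within the paper's own framework, with no external reference needed.
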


\begin{proof}
The first is since $\Pi_{p}$ is a right adjoint (and being mono means
the kernel pair pullback consists of two identities). The second fact
is shown in \cite[Corollary 47]{mres}.
\end{proof}

\subsection{Bicategories of polynomials}

We first recall the notion of polynomial (and the associated polynomial
functor) in a general locally cartesian category $\mathcal{E}$.
\begin{defn}
Let $\mathcal{E}$ be a locally cartesian closed category. A \emph{polynomial}
in $\mathcal{E}$ from $I$ to $J$ is a diagram of the form
\[
\xymatrix{ & E\ar[r]^{p}\ar[ld]_{s} & B\ar[rd]^{t}\\
I &  &  & J
}
\]
and its \emph{associated polynomial functor }is the composite
\[
\xymatrix{\mathcal{E}/I\ar[r]^{\Delta_{s}} & \mathcal{E}/E\ar[r]^{\Pi_{p}} & \mathcal{E}/B\ar[r]^{\Sigma_{t}} & \mathcal{E}/J}
\]
A \emph{cartesian map of polynomials} is a diagram as below where
the middle square is a pullback
\[
\xymatrix{ & E_{1}\ar[r]^{p_{1}}\ar[ld]_{s_{1}}\ar[dd]\ar@{}[rdd]|-{\textnormal{pb}} & B_{1}\ar[rd]^{t_{1}}\ar[dd]\\
I &  &  & J\\
 & E_{2}\ar[r]_{p_{2}}\ar[ul]^{s_{2}} & B_{2}\ar[ur]_{t_{2}}
}
\]
\end{defn}

Much like distributivity pullbacks, the composite of two polynomials
is defined as a certain terminal diagram (with respect to placing
a pullback on top of the diagram). This allows polynomials in $\mathcal{E}$
to be assembled into a bicategory $\mathbf{Poly}\left(\mathcal{E}\right).$
\begin{defn}
\cite{weber} Let $\mathcal{E}$ be a locally cartesian closed category.
The composite of two polynomials $\left(s,p,t\right)$ and $\left(u,q,v\right)$\footnote{Equipped with a choice of a factorization of the middle map, which
is unique up to unique isomorphism as it is determined as a pullback.} is defined as the terminal diagram of the form below
\[
\xymatrix{ & X\ar[d]\ar[rr] & \ar@{}[ld]|-{\textnormal{pb}} & Y\ar[rd]\ar[rr]\ar[ld] & \ar@{}[dr]|-{\textnormal{pb}} & Z\ar[d]\\
 & E\ar[r]^{p}\ar[ld]_{s} & B\ar[rd]^{t} &  & S\ar[r]^{q}\ar[ld]_{u} & T\ar[rd]^{v}\\
I &  &  & J &  &  & K
}
\]
meaning that any other such diagram uniquely factors as the above
diagram with a pullback placed on top of the map $X\to Y\to Z$. Note
the middle square need not be a pullback above.
\end{defn}

This composite can be constructed directly using distributivity pullbacks.
\begin{prop}
\cite{weber,gambinokock} Let $\mathcal{E}$ be a locally cartesian
closed category. The composite of two polynomials $\left(s,p,t\right)$
and $\left(u,q,v\right)$ is constructed by forming the middle pullback,
right distributivity pullback, and then left pullback as below
\[
\xymatrix{ & X\ar[rr]\ar[dd] &  & Y\ar[rr]\ar[d] &  & Z\ar[dd]\\
 &  & \ar@{}[]|-{\textnormal{pb}} & M\ar[rd]\ar[ld]\ar@{}[dd]|-{\textnormal{pb}} & \ar@{}[]|-{\textnormal{dpb}}\\
 & E\ar[r]^{p}\ar[ld]_{s} & B\ar[rd]^{t} &  & S\ar[r]^{q}\ar[ld]_{u} & T\ar[rd]^{v}\\
I &  &  & J &  &  & K
}
\]
\end{prop}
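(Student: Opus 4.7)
The plan is to verify that this three-step construction satisfies the universal property defining the composite, namely being terminal among diagrams of the stated shape with pullback squares on the far left and far right. I would split the argument into (a) checking that the construction does produce a diagram of the stated form, and (b) showing that any competing diagram factors through it uniquely by a pullback placed on top of the top row $X \to Y \to Z$.

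For (a), set $M = B \times_J S$ as the middle pullback, let $Z = \Pi_q(M \to S)$, and let $Y$ be the top-left object of the distributivity pullback around $(M \to S, q)$ with counit $Y \to M$. By the previous proposition the right-hand square $Y \to Z$, $Y \to S$, $Z \to T$, $q$ is a pullback, and the left pullback $X = E \times_B Y$, taken along the composite $Y \to M \to B$, produces the left square. The outer triangles down to $I$ and $K$ commute by precomposition with $s$ and postcomposition with $v$.

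For (b), given another such diagram with top row $X' \to Y' \to Z'$ satisfying the same pullback conditions, I would proceed in three stages. First, the maps $Y' \to B$ and $Y' \to S$ agree over $J$, hence induce a unique $Y' \to M$ via the middle pullback. Second, the right-hand pullback of the competitor, namely $Y' \to M \to S$ paired with $Y' \to Z' \to T$, is precisely a pullback around $(M \to S,\ q)$, so the universal property of the right distributivity pullback yields a unique pullback factorization $Y' \to Y$, $Z' \to Z$, with $Y' \to M$ factoring through the counit $Y \to M$. Third, since $Y' \to B$ now factors as $Y' \to Y \to B$, the pasting lemma for pullbacks applied to the two stacked squares $X' \to Y' \to B$ over $X' \to E \to B$ and $X \to Y \to B$ over $X \to E \to B$ delivers a unique $X' \to X$ for which the induced left square is a pullback. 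The only genuine obstacle is bookkeeping---keeping straight which squares are pullbacks and which universal property produces which map---and once the pasting lemma and the explicit dpb universal property are in hand, each factorization is forced, with the maps to $I$ and $K$ compatible automatically.
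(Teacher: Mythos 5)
The paper states this proposition without proof, citing Weber and Gambino--Kock, so there is no in-paper argument to compare against; your reconstruction is the standard verification from those sources and is correct. Your three-stage factorization (induce $Y'\to M$ from the middle pullback, apply the universal property of the distributivity pullback around $M\to S$ and $q$ to get $Y'\to Y$ and $Z'\to Z$, then use the two-pullback pasting lemma for $X'\to X$) is exactly the right decomposition, and the one point you gloss over --- that a factorization compatible with the maps to $B$ and $S$ is automatically compatible with the induced map to $M$, so the dpb universal property really does give uniqueness --- is immediate from $M$ being a pullback.
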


\begin{rem}
We observe (at least with \emph{composites of polynomials}) that constructions
on polynomial functors are defined as terminal diagrams, and these
may be calculated explicitly by using distributivity pullbacks. Later
on, we will see that \emph{terms of polynomials} also follow this
pattern.
\end{rem}

\subsection{Extensive categories}

We now recall the notions of distributive, extensive, and lextensive
categories. These are categories in which coproducts behave nicely
with respect to certain limits such as products and pullbacks.
\begin{defn}
A category $\mathcal{E}$ with finite products and finite coproducts
is said to be\emph{ distributive }if the canonical morphism
\[
X\times Y+X\times Z\to X\times\left(Y+Z\right)
\]
is invertible for all $X,Y,Z$ in $\mathcal{E}$.
\end{defn}

The following facts about distributive categories will be used often
in this paper.
\begin{prop}
\cite{extensive} In a distributive category $\mathcal{E}$, coproduct
injections are monic and the initial object is strict.
\end{prop}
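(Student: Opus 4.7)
The plan is to prove the two claims using the distributivity isomorphism $\phi \colon X \times Y + X \times Z \xrightarrow{\cong} X \times (Y+Z)$ together with the observation that graph morphisms $\langle 1_X, h\rangle$ are always split monic, with $\pi_1$ as retraction. Since $\phi$ sends the coproduct injection $\iota_1$ to $1_X \times \iota_1$, questions about coproduct injections translate into questions about graphs inside a product.

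For strictness of $\mathbf{0}$, I would first show $X \times \mathbf{0} \cong \mathbf{0}$. Writing $A = X \times \mathbf{0}$ and applying distributivity to $\mathbf{0} + \mathbf{0} \cong \mathbf{0}$ gives $A + A \cong X \times (\mathbf{0} + \mathbf{0}) \cong A$, and a direct check identifies this isomorphism with the codiagonal $\nabla_A$. Since $\nabla_A \cdot \iota_i = 1_A$ and $\nabla_A$ is invertible, we deduce $\iota_1 = \iota_2 \colon A \to A + A$, which forces any two parallel morphisms out of $A$ to agree, so $A$ is subterminal. Together with $\pi_2 \colon A \to \mathbf{0}$ and the unique $\mathbf{0} \to A$, this yields $A \cong \mathbf{0}$. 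For $f \colon X \to \mathbf{0}$, the graph $\langle 1_X, f\rangle \colon X \to X \times \mathbf{0}$ then has $\pi_1$ as retraction, and since $X \times \mathbf{0} \cong \mathbf{0}$ has only one endomorphism we also have $\langle 1_X, f\rangle \cdot \pi_1 = 1_{X \times \mathbf{0}}$, making $\langle 1_X, f\rangle$ invertible and $X \cong \mathbf{0}$.

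For monicness of $\iota_1 \colon Y \to Y + Z$, suppose $f, g \colon X \to Y$ satisfy $\iota_1 f = \iota_1 g$. The identity $\langle 1_X, \iota_1 h\rangle = (1_X \times \iota_1)\langle 1_X, h\rangle$ gives $(1_X \times \iota_1)\langle 1_X, f\rangle = (1_X \times \iota_1)\langle 1_X, g\rangle$, and transporting across $\phi^{-1}$ yields $\iota_1 \langle 1_X, f\rangle = \iota_1 \langle 1_X, g\rangle$ inside $X \times Y + X \times Z$. To cancel this $\iota_1$ I would use $f$ itself to build a map $\langle \pi_1, f\pi_1\rangle \colon X \times Z \to X \times Y$ and hence a retraction $r = [1_{X \times Y},\, \langle \pi_1, f\pi_1\rangle]$ satisfying $r \cdot \iota_1 = 1_{X \times Y}$. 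Applying $r$ and then $\pi_2$ yields $f = g$; the case of $\iota_2$ is symmetric. The subtle point is that although no retraction of $\iota_1 \colon Y \to Y + Z$ exists in $\mathcal{E}$ in general (this would need a morphism $Z \to Y$), once the problem is pushed through distributivity into $X \times Y + X \times Z$, the hypothesis morphism $f$ supplies exactly the extra data needed to split the translated injection.
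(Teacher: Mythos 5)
Your argument is correct. The paper itself offers no proof of this proposition---it is cited from the reference on extensive categories---and what you have written is essentially the standard argument from that source: first deducing $X\times\mathbf{0}\cong\mathbf{0}$ from the invertibility of the codiagonal $A+A\to A$ for $A=X\times\mathbf{0}$, then cancelling the transported injection $X\times Y\to X\times Y+X\times Z$ by means of the retraction $[1_{X\times Y},\langle\pi_{1},f\pi_{1}\rangle]$ built from the graph of $f$. One terminological quibble only: an object out of which any parallel pair of morphisms agree is not usually called \emph{subterminal} (that is the dual property), but this does not affect the correctness of the step.
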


Extensive categories are those in which coproducts interact well with
pullbacks.
\begin{defn}
\cite{extensive} A category $\mathcal{E}$ with finite coproducts
is said to be\emph{ extensive }if the canonical coproduct functor
\[
\mathcal{E}/X\times\mathcal{E}/Y\to\mathcal{E}/\left(X+Y\right)
\]
is an equivalence of categories.
\end{defn}

When computing pullbacks the following is often a more useful characterization.
\begin{prop}
\cite{extensive} A category $\mathcal{E}$ with pullbacks and finite
coproducts is extensive precisely when for every commuting diagram
as below
\[
\xymatrix{A\ar[rr]\ar[d] &  & M\ar[d] &  & B\ar[ll]\ar[d]\\
X\ar[rr]_{i_{X}} &  & X+Y &  & Y\ar[ll]^{i_{Y}}
}
\]
where $i_{X}$ and $i_{Y}$ are coproduct injections, the two squares
are pullbacks if and only if the top row is a coproduct diagram.
\end{prop}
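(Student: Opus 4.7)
My plan is to prove both directions of the equivalence, using as a bridge the standard fact that in an extensive category with pullbacks, the two coproduct--injection squares
\[
\xymatrix@R=1em{A\ar[r]\ar[d] & A+B\ar[d] & B\ar[l]\ar[d]\\ X\ar[r]_{i_{X}} & X+Y & Y\ar[l]^{i_{Y}}}
\]
are always pullbacks. This follows from extensivity by noting that the equivalence $\mathcal{E}/X\times\mathcal{E}/Y \to \mathcal{E}/(X+Y)$ given by coproduct must admit some quasi-inverse; the pullback functors along $i_{X}$ and $i_{Y}$ (which exist by the hypothesis that $\mathcal{E}$ has pullbacks) provide one, and the unit isomorphism expresses exactly that the two displayed squares are pullbacks.

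For the direction that extensivity implies the pullback--coproduct characterization, I would fix a commuting diagram of the stated shape. If the top row is a coproduct diagram then the whole picture is obtained from the bridge fact applied to $(A\to X,\,B\to Y)$, so both squares are pullbacks. Conversely, if both squares are pullbacks, I would use essential surjectivity of the coproduct functor to pick $A'\to X$ and $B'\to Y$ with $M\cong A'+B'$ over $X+Y$; the bridge fact identifies the pullbacks of $A'+B'\to X+Y$ along the two injections with $A'$ and $B'$, so uniqueness of pullbacks yields $A\cong A'$ and $B\cong B'$ compatibly with the top row, exhibiting it as the coproduct diagram $A\to A+B\leftarrow B$.

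For the converse, assume the characterization holds and check that the coproduct functor is an equivalence. Essential surjectivity: given $f\colon M\to X+Y$, form the two pullbacks along $i_{X}$ and $i_{Y}$, producing a commuting diagram of the stated shape with both squares pullbacks; by hypothesis the top row is a coproduct, so $M$ is isomorphic over $X+Y$ to the coproduct of its two pullback--projections. Fullness: given $h\colon A_{1}+B_{1}\to A_{2}+B_{2}$ over $X+Y$, the hypothesis (applied to $A_{2}+B_{2}\to X+Y$) makes $A_{2}$ and $B_{2}$ pullbacks of $A_{2}+B_{2}$ along $i_{X}$ and $i_{Y}$; the composite $A_{1}\to A_{1}+B_{1}\to A_{2}+B_{2}$ lies over $i_{X}$ and therefore factors uniquely through $A_{2}$, yielding $\alpha\colon A_{1}\to A_{2}$, and similarly one obtains $\beta\colon B_{1}\to B_{2}$, with $h=\alpha+\beta$ by the universal property of coproducts. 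Faithfulness is immediate from that same universal property.

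The main obstacle is verifying the bridge fact cleanly from the abstract definition of extensivity; once it is in hand, each direction is a routine unpacking of the universal properties of coproducts and pullbacks.
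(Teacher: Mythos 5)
The paper does not prove this proposition; it is recalled verbatim from the cited reference \cite{extensive}, so there is no internal proof to compare against. Your argument is correct in outline and is essentially the standard one for this characterization. The step you rightly single out as the main obstacle --- the ``bridge fact'' that the canonical squares over $(A\to X,\,B\to Y)$ are pullbacks in an extensive category --- is most cleanly settled by noting that the coproduct functor $\Phi\colon\mathcal{E}/X\times\mathcal{E}/Y\to\mathcal{E}/(X+Y)$ \emph{always} has the pair of pullback functors $(\Delta_{i_X},\Delta_{i_Y})$ as a right adjoint (combine the universal property of coproducts with $\Sigma_{i_X}\dashv\Delta_{i_X}$); when $\Phi$ is an equivalence this right adjoint is automatically a quasi-inverse, and invertibility of the unit at $(A\to X,\,B\to Y)$ is literally the statement that the two squares are pullbacks. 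This makes precise your claim that ``the unit isomorphism expresses exactly'' the pullback condition. One small repair: in the faithfulness step, the coproduct universal property only yields $i_{A_2}\alpha=i_{A_2}\alpha'$, which does not give $\alpha=\alpha'$ unless the injection $i_{A_2}$ is already known to be monic (not an ambient hypothesis here); instead invoke the uniqueness clause of the pullback property of the square exhibiting $A_2$ as $\Delta_{i_X}(A_2+B_2)$, which you have already established in the fullness step. With that adjustment, all four pieces (the two directions of the equivalence, essential surjectivity, and full faithfulness) go through as you describe.
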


It is also worth noting that finite coproducts are disjoint in an
extensive category \cite{extensive}. Finally, we note that one often
uses a slightly stronger version of this definition which incorporates
finite limits.
\begin{defn}
\cite{extensive} An extensive category $\mathcal{E}$ is said to
be \emph{lextensive }if it has all finite limits.
\end{defn}

\section{Negation and the dense-closed factorization}

In this section we define (for each object $B$ in a locally cartesian
closed $\mathcal{E}$ with a strict initial object) a negation operator
$\neg\colon\mathcal{E}/B\to\mathcal{E}/B$. This negation operator,
defined in terms of distributivity pullbacks, is then used to define
the dense and closed morphisms in $\mathcal{E}$, and we show these
two classes of maps yield an orthogonal factorization system on $\mathcal{E}$.
In the case of $\mathbf{Set}$ this is the usual image factorization;
more generally one may view this as the 'closure of the image'.

We then go on to construct the localization of $\mathcal{E}$ by the
class $\mathcal{W}$ of dense monomorphisms, showing that it is locally
cartesian closed, and moreover that in this localization every morphism
is ``decidable'' in a suitable sense. 

\subsection{Negation operator}

The following defines the negation (image complement) of general arrow
$p\colon E\to B$. The reader may consider the case where $p$ is
a injection in $\mathbf{Set}$, in which case the image complement
is simply $B\setminus E$.
\begin{defn}
Suppose $\mathcal{E}$ is a category with a strict initial object
$\mathbf{0}$. For a given exponentiable arrow $p\colon E\to B$ we
define the \emph{negation} of $p$, denoted $\neg p\colon B\setminus\textnormal{im}p\to B$,
to be the arrow
\[
\xymatrix{\mathbf{0}\ar[rr]^{!}\ar@{=}[d] &  & B\setminus\textnormal{im}p\ar[dd]^{\neg p}\\
\mathbf{0}\ar[d]_{!}\\
E\ar[rr]_{p} &  & B
}
\]
in the distributivity pullback above. 
\end{defn}

\begin{rem}
Note that for a monomorphism $p$, $E$ is disjoint to $B\setminus E$
(defined as above with $\textnormal{im}p\cong E$) by definition.
Moreover, $B\setminus E+E\cong B$ when this is a pushout diagram.
This is always true in $\mathbf{Set}$, that is any such $p$ is ``decidable''.
We discuss decidability further later on.
\end{rem}

\subsection{The object of distinct pairs}

As a example of using this negation operator, we construct the object
of \emph{distinct} pairs $E\boxtimes_{B}E$ (as a subobject of the
kernel pair $E\times_{B}E$). In $\mathbf{Set}$, and for a function
$p\colon E\to B$, this is given by 
\[
E\boxtimes_{B}E=\left\{ \left(e_{1},e_{2}\right)\mid e_{1}\neq e_{2}\wedge pe_{1}=pe_{2}\right\} 
\]

For a general locally cartesian closed category $\mathcal{E}$ with
a strict initial, we may define this object as follows.
\begin{defn}
For a map $p\colon E\to B$ we define the \emph{object of distinct
pairs} $E\boxtimes_{B}E$ by the distributivity pullback

\[
\xymatrix{\mathbf{0}\ar[rr]^{!}\ar@{=}[d] &  & E\boxtimes_{B}E\ar[dd]^{\neg\delta_{p}}\\
\mathbf{0}\ar[d]_{!}\\
E\ar[rr]_{\delta_{p}} &  & E\times_{B}E
}
\]
where $\delta_{p}\colon E\to E\times_{B}E$ is the diagonal.
\end{defn}

\begin{rem}
Asking that this is a pushout, so that $E+E\boxtimes_{B}E\cong E\times_{B}E$,
corresponds to asking that equality is decidable: that is for any
$x,y\in E$ we have $x=y$ or $x\neq y$.
\end{rem}

\begin{rem}
More generally, we have the $n$-diagonal into the $n$-kernel pair
$\delta_{p}^{n}\colon E\to E\times_{B}E\cdots\times_{B}E$. We denote
this $n$-pullback $E^{\times n}$ and we denote $E^{\boxtimes_{B}n}\to E^{\times n}$
the negation of the $n$-diagonal. This gives us the object of distinct
$n$-tuples $E^{\boxtimes_{B}n}$.
\end{rem}

\subsection{The dense-closed OFS}

We now use the negation operator to define the dense and closed maps
in $\mathcal{E}$, and show that this is a stable orthogonal factorization
system.
\begin{defn}
Let $\mathcal{E}$ be a locally cartesian closed category with a strict
initial object. For a morphism $f\colon A\to B$ we say 
\begin{itemize}
\item $f$ is \emph{closed} iff $\neg\neg f\cong f$ in $\mathcal{E}/B$;
\item $f$ is \emph{dense} iff $\neg\neg f\cong\textnormal{id}_{B}$ in
$\mathcal{E}/B$;
\end{itemize}
\end{defn}

We must first check that we indeed have a dense-closed factorization.
\begin{thm}
\label{facexist} Every map $p\colon E\to B$ factors as a dense map
followed by a closed map. 
\end{thm}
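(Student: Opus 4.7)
The plan is to factor $p\colon E\to B$ as $p=(\neg\neg p)\cdot d$, obtaining the dense part $d$ from the universal property of the distributivity pullback that defines $\neg\neg p$.

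\emph{Constructing $d$.} By the defining pullback of $\neg p$, the pullback of $\neg p$ along $p$ is $!\colon\mathbf{0}\to E$, so the data $(\mathbf{0},E,p)$ forms a pullback around $!\colon\mathbf{0}\to B\setminus\textnormal{im}\,p$ and $\neg p$. Terminality of the dpb defining $\neg\neg p$ then produces a unique $d\colon E\to B\setminus\textnormal{im}\,\neg p$ with $\neg\neg p\cdot d=p$.

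\emph{Showing $\neg\neg p$ is closed.} I would first establish the order-reversal lemma: if $f=g\cdot h$ then $\neg g$ factors through $\neg f$ over $B$. The argument is that pullback pasting together with the defining pullback of $\neg g$ makes the pullback of $\neg g$ along $f$ equal to $!\colon\mathbf{0}\to\textnormal{dom}\,f$, whence terminality of the dpb for $\neg f$ yields the required map. Applied to the factorization $p=\neg\neg p\cdot d$ this gives $\neg\neg\neg p\leq\neg p$, while applying the factorization construction to $\neg p$ (in place of $p$) gives the reverse inequality. Hence $\neg p\cong\neg\neg\neg p$, and therefore $\neg\neg p\cong\neg\neg\neg\neg p$ is closed.

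\emph{Showing $d$ is dense.} Since $\neg\neg p$ is monic (as a negation) and $p$ factors through it via $d$, the pullback of $\neg\neg p$ along $p$ exhibits $d$ as its top edge; composing with the defining pullback of $\neg d$, the pullback of $(\neg\neg p)\cdot(\neg d)\colon B\setminus\textnormal{im}\,d\to B$ along $p$ is $!\colon\mathbf{0}\to E$. Terminality of the dpb for $\neg p$ then produces a map $B\setminus\textnormal{im}\,d\to B\setminus\textnormal{im}\,p$ over $B$, so $B\setminus\textnormal{im}\,d$ sits over $B$ under both $\neg\neg p$ and $\neg p$ and hence factors through their pullback. That pullback is $\mathbf{0}$ by the very definition of $\neg\neg p$, so strictness of $\mathbf{0}$ forces $B\setminus\textnormal{im}\,d\cong\mathbf{0}$. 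Since $\neg(\mathbf{0}\to X)\cong\textnormal{id}_X$ by direct inspection of the dpb, this gives $\neg\neg d\cong\textnormal{id}$, i.e.\ $d$ is dense.

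The main obstacle I anticipate is the order-reversal lemma $f\leq g\Rightarrow\neg g\leq\neg f$, which is the one nontrivial transport of a factorization into a dpb universal property; every remaining step is a direct application of the terminality of dpbs together with strictness of $\mathbf{0}$.
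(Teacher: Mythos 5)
Your proof is correct and follows essentially the same route as the paper: obtain $d$ with $p=\neg\neg p\cdot d$ from disjointness of $p$ and $\neg p$, deduce closedness of $\neg\neg p$ from $\neg\neg\neg p\cong\neg p$ (your order-reversal lemma packages exactly the paper's two comparisons $\theta$ and $\phi$), and show $\textnormal{dom}\,\neg d\cong\mathbf{0}$ to conclude density. The only real divergence is in the last step, where the paper invokes the horizontal pasting lemma for distributivity pullbacks (using that the counit is invertible because $\neg\neg p$ is mono) to identify $\textnormal{dom}\,\neg d$ with the pullback of $\neg p$ along $\neg\neg p$, whereas you reach the same pullback directly from the universal property of the dpb defining $\neg p$ together with strictness of $\mathbf{0}$.
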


We divide the proof into a few lemmata.
\begin{lem}
Every map $p\colon E\to B$ factors as a candidate dense map $\overline{p}$
followed by the candidate closed map $\neg\neg p$. 
\end{lem}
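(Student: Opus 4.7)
The plan is to obtain the factorization $p = \neg\neg p \cdot \overline{p}$ directly from the universal property of $\neg\neg p$ as a distributivity pullback, without performing any explicit computation.

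Recall that $\neg\neg p$ is constructed as the distributivity pullback around the pair of maps $!\colon \mathbf{0}\to B\setminus\textnormal{im}\,p$ and $\neg p\colon B\setminus\textnormal{im}\,p\to B$. Its universal property says that any other pullback around this pair factors uniquely through it. To produce $\overline{p}$, then, I need to exhibit $p\colon E\to B$ as the bottom-right data of such a pullback around. But this is free: the top square of the distributivity pullback defining $\neg p$ is by definition a pullback whose right edge is $\neg p$, whose bottom edge is $p$, and whose top-left corner is $\mathbf{0}$ (with left edge the identity $!_{\mathbf{0}}\colon \mathbf{0}\to E$ factoring through $\mathbf{0}$ by strict initiality). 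Combining this pullback with the identity $\mathbf{0}=\mathbf{0}$ produces exactly a pullback around $(!,\neg p)$ with right column $p$, so the universal property of $\neg\neg p$ yields the required unique $\overline{p}\colon E \to \textnormal{dom}(\neg\neg p)$ satisfying $\neg\neg p\cdot\overline{p}=p$ (and realising the top square as a pullback).

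The main step is thus just to re-read the dpb defining $\neg p$ as certifying that the pullback of $\neg p$ along $p$ is initial; once this is observed, the lemma is immediate from the universal property of the dpb $\neg\neg p$. No substantive obstacle arises. The labels ``candidate dense'' for $\overline{p}$ and ``candidate closed'' for $\neg\neg p$ are placeholders at this stage: the verifications $\neg\neg \overline{p}\cong \textnormal{id}$ and $\neg\neg(\neg\neg p)\cong \neg\neg p$ needed to complete Theorem \ref{facexist} lie outside the content of this lemma and are to be handled by the subsequent lemmata.
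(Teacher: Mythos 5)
Your proposal is correct and follows essentially the same route as the paper: the outer rectangle of the dpb defining $\neg p$ certifies that $p$ and $\neg p$ have initial pullback, and this disjointness exhibits $p$ as a pullback around $(!,\neg p)$, so the universal property of the dpb defining $\neg\neg p$ induces the unique $\overline{p}$ with $\neg\neg p\cdot\overline{p}=p$. The only cosmetic difference is that you invoke the universal property explicitly where the paper just says ``we have induced maps into the distributivity pullback.''
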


\begin{proof}
Since $p$ is disjoint to $\neg p$, and $\neg p$ is disjoint to
$\neg\neg p$, we have induced maps into the distributivity pullback
as below
\[
\xymatrix{\mathbf{0}\ar@{=}[d]\ar[rr]^{!} &  & E\ar@{..>}[d]^{\overline{p}}\ar@/^{2pc}/[ddd]^{p}\\
\mathbf{0}\ar[rr]^{!}\ar@{=}[d] &  & \textnormal{im}p\ar[dd]^{\neg\neg p}\\
\mathbf{0}\ar[d]_{!} & \textnormal{dpb}\\
B\setminus\textnormal{im}p\ar[rr]_{\neg p} &  & B
}
\]
\end{proof}
\begin{rem}
The map $\overline{p}\colon p\to\neg\neg p$ is as appears in intuitionistic
logic \cite{martinlof}, which is to be expected in a locally cartesian
closed $\mathcal{E}$.
\end{rem}

It remains to check that $\overline{p}$ is dense and that $\neg\neg p$
is closed; but we first need the following.
\begin{lem}
\label{tplneg} The triple negation $\neg\neg\neg p$ is isomorphic
to the single negation $\neg p$.
\end{lem}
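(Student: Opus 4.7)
The plan is to unpack the universal property of $\neg q$ for a general $q\colon Q\to B$: by definition of $\neg q$ as the distributivity pullback around $\mathbf{0}\to Q$ and $q$, together with strictness of $\mathbf{0}$, a morphism $f\colon Y\to B$ admits a morphism $f\to\neg q$ in $\mathcal{E}/B$ if and only if $f$ is disjoint from $q$ (that is, $f\times_B q\cong\mathbf{0}$), in which case such a morphism is unique. With this ``subterminal-like'' interpretation in hand, my strategy is to produce mutually inverse maps $\neg p\to\neg\neg\neg p$ and $\neg\neg\neg p\to\neg p$ in $\mathcal{E}/B$ by verifying the appropriate disjointness conditions, and then use the uniqueness clause to identify each composite with an identity.

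The first direction is immediate: since $\neg\neg p=\neg(\neg p)$ is by construction disjoint from $\neg p$ over $B$, the universal property of $\neg\neg\neg p=\neg(\neg\neg p)$ applied to $\neg p$ yields a unique map $\neg p\to\neg\neg\neg p$ in $\mathcal{E}/B$. For the converse, I would invoke the factorization $p=\neg\neg p\cdot\overline{p}$ from the preceding lemma to rewrite the relevant pullback as
\[
\neg\neg\neg p\times_B E \;\cong\; (\neg\neg\neg p\times_B \textnormal{im}\,p)\times_{\textnormal{im}\,p} E.
\]
The outer factor is $\mathbf{0}$ by the defining disjointness of $\neg\neg\neg p$ from $\neg\neg p$, and hence by strictness of $\mathbf{0}$ (any object mapping to $\mathbf{0}$ is $\mathbf{0}$) the entire pullback collapses to $\mathbf{0}$. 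This shows $\neg\neg\neg p$ is disjoint from $p$, so the universal property of $\neg p$ supplies the return map $\neg\neg\neg p\to\neg p$.

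To conclude, I would note that the two composites are self-maps in $\mathcal{E}/B$ of $\neg p$ (which is disjoint from $p$) and of $\neg\neg\neg p$ (which is disjoint from $\neg\neg p$), so the uniqueness clause of the universal property forces both to be the identity. The only mildly subtle point I anticipate is the pullback decomposition step above: one must carefully distinguish the role of $\textnormal{im}\,p$ as living over $B$ via $\neg\neg p$ and then apply strictness to cancel $\mathbf{0}\times_{\textnormal{im}\,p}E$. Beyond that the argument is purely formal, and the triple-negation identity emerges as an instance of the principle that $\neg q$ is determined by the class of maps disjoint from $q$.
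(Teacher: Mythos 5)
Your proof is correct and follows essentially the same route as the paper: the forward map comes from the built-in disjointness of $\neg p$ and $\neg\neg p$, and the return map comes from showing $\neg\neg\neg p$ is disjoint from $p$ by factoring $p=\neg\neg p\cdot\overline{p}$ and using strictness of $\mathbf{0}$, with uniqueness forcing the composites to be identities. The only difference is presentational — you phrase the universal property of $\neg q$ via the $\Delta_q\dashv\Pi_q$ adjunction (maps into $\neg q$ correspond to disjointness), whereas the paper draws the induced maps into the distributivity pullbacks explicitly.
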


\begin{proof}
To see this we note the existence of comparisons $\theta$ and $\phi$
\[
\xymatrix{\mathbf{0}\ar@{=}[d]\ar[rr]^{!} &  & \textnormal{dom}\neg p\ar@{..>}[d]^{\theta}\ar@/^{4pc}/[ddd]^{\neg p} &  &  & \mathbf{0}\ar@{=}[d]\ar[rr]^{!} &  & \textnormal{dom}\neg\neg\neg p\ar@{..>}[d]^{\phi}\ar@/^{4pc}/[ddd]^{\neg\neg\neg p}\\
\mathbf{0}\ar[rr]^{!}\ar@{=}[d] &  & \textnormal{dom}\neg\neg\neg p\ar[dd]^{\neg\neg\neg p} &  &  & \mathbf{0}\ar[rr]^{!}\ar@{=}[d] &  & \textnormal{dom}\neg p\ar[dd]^{\neg p}\\
\mathbf{0}\ar[d]_{!} &  &  &  &  & \mathbf{0}\ar[d]_{!}\\
\textnormal{dom}\neg\neg p\ar[rr]_{\neg\neg p} &  & B &  &  & E\ar[rr]_{p} &  & B
}
\]
where the disjointness of $p$ and $\neg\neg\neg p$ is shown in the
pullback diagram below
\[
\xymatrix{\mathbf{0}\ar[dd]\ar@{=}[rr] &  & \mathbf{0}\ar[rr]^{!}\ar@{=}[d] &  & \textnormal{dom}\neg\neg\neg p\ar[dd]^{\neg\neg\neg p}\\
 &  & \mathbf{0}\ar[d]_{!}\\
E\ar[rr]_{\overline{p}}\ar@/_{2pc}/[rrrr]_{p} &  & \textnormal{im}p\ar[rr]_{\neg\neg p} &  & B
}
\]
\end{proof}
\begin{rem}
It almost immediately follows that the double complement $\neg\neg$
defines a closure operator (idempotent monad on each slice category
of monomorphisms $\mathcal{E}_{\textnormal{mono}}/B$).
\end{rem}

\begin{proof}
We can now finish the proof of Theorem \ref{facexist}.
\end{proof}
\begin{lem}
We have that $\overline{p}$ is dense and that $\neg\neg p$ is closed;
\end{lem}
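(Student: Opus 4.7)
The plan is as follows. The closedness of $\neg\neg p$ is a one-line consequence of Lemma \ref{tplneg} applied to the arrow $\neg p$: this yields $\neg\neg\neg(\neg p)\cong\neg(\neg p)$, which reads $\neg\neg(\neg\neg p)\cong\neg\neg p$ in $\mathcal{E}/B$, as required.

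For the density of $\overline{p}$ I would first show that $\neg\overline{p}\cong(\mathbf{0}\to\textnormal{im}\, p)$ in $\mathcal{E}/\textnormal{im}\, p$, and then observe that $\neg(\mathbf{0}\to\textnormal{im}\, p)\cong\textnormal{id}_{\textnormal{im}\, p}$. The second assertion is a direct unwinding of the defining distributivity pullback: strictness of $\mathbf{0}$ forces the top-left corner to be $\mathbf{0}$, the disjointness constraint on the right vertical map is vacuous, and so the terminal candidate is the identity on $\textnormal{im}\, p$.

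To establish the first assertion $\neg\overline{p}\cong\mathbf{0}$, note first that $\neg\neg p$ is a monomorphism, by applying twice the fact from the preceding proposition that distributivity pullbacks send monos to monos (starting from $\mathbf{0}\to E$, which is monic since $\mathbf{0}$ is strict initial). Set $q:=\neg\neg p\cdot\neg\overline{p}\colon\textnormal{dom}\,\neg\overline{p}\to B$; since both $q$ and $p$ factor through the monomorphism $\neg\neg p$, the pullback of $q$ along $p$ can be computed over $\textnormal{im}\, p$ and is the $\mathbf{0}$-corner of the dpb defining $\neg\overline{p}$. The universal property of $\neg p$ as the dpb around $\mathbf{0}\to E$ and $p$ therefore supplies a map $q\to\neg p$ in $\mathcal{E}/B$. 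Pulling this map back along $\neg\neg p$ yields a map $\Delta_{\neg\neg p}q\to\Delta_{\neg\neg p}\neg p$ in $\mathcal{E}/\textnormal{im}\, p$, whose source is $\neg\overline{p}$ (as $q$ factors through the mono $\neg\neg p$) and whose target is $\mathbf{0}\to\textnormal{im}\, p$ (since the top edge of the dpb defining $\neg\neg p$ is in particular a pullback). Strictness of the initial object then forces $\textnormal{dom}\,\neg\overline{p}\cong\mathbf{0}$.

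The main technical obstacle is the bookkeeping of slice categories and pullbacks; conceptually the density of $\overline{p}$ is nothing more than the assertion that the ``complement'' $\neg p$ and the ``closure'' $\neg\neg p$ of $p$ are disjoint subobjects of $B$, so their intersection, which is precisely $\neg\overline{p}$ after transporting to $\mathcal{E}/\textnormal{im}\, p$, vanishes.
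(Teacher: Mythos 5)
Your proof is correct, and the density argument takes a genuinely different route from the paper's. For closedness you both reduce to Lemma \ref{tplneg}; your instantiation at $\neg p$ (giving $\neg\neg(\neg\neg p)\cong\neg\neg p$) is exactly the intended ``immediate consequence.'' For density, the paper decomposes the distributivity pullback defining $\neg p$ along the factorization $p=\neg\neg p\cdot\overline{p}$ using the \emph{horizontal pasting lemma} for dpbs, and then uses that $\neg\neg p$ is mono to make the middle counit invertible, which identifies $\textnormal{dom}\,\neg\overline{p}$ outright with the pullback of $\neg p$ along $\neg\neg p$, i.e.\ with $\mathbf{0}$. You instead bypass the pasting lemma entirely: you check directly that $q=\neg\neg p\cdot\neg\overline{p}$ pulls back to $\mathbf{0}$ along $p$ (using monicity of $\neg\neg p$ to compute the pullback over $\textnormal{im}\,p$), invoke the universal property of the dpb defining $\neg p$ to get a comparison $q\to\neg p$, restrict along $\neg\neg p$ to land in $\Delta_{\neg\neg p}\neg p\cong\mathbf{0}$, and finish by strictness. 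Both arguments hinge on the same two facts --- $\neg\neg p$ is mono, and $\neg p$ is disjoint from $\neg\neg p$ --- but yours trades the pasting lemma and the invertible-counit proposition for a bare universal-property argument plus strictness, which is slightly more elementary; the paper's version buys an exact identification of $\textnormal{dom}\,\neg\overline{p}$ rather than just a map into $\mathbf{0}$. Your closing observation that $\neg(\mathbf{0}\to\textnormal{im}\,p)\cong\textnormal{id}_{\textnormal{im}\,p}$ (so that initial $\neg\overline{p}$ really does give $\neg\neg\overline{p}\cong\textnormal{id}$) is a step the paper leaves implicit, and it is worth making explicit as you do.
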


\begin{proof}
That $\neg\neg p$ is closed is an immediate consequence. Now to show
that $\overline{p}$ is dense meaning $\neg\neg\overline{p}\cong\textnormal{id}$
(that is $\neg\neg\neg\overline{p}\cong\neg\overline{p}\cong0$ by
the above), we use that:
\begin{itemize}
\item the negation of $p$, constructed as a dpb, may be constructed as
the horizontal composite of two dpbs if we decompose $p$ as $\neg\neg p\cdot\overline{p}$
\item $\neg\neg p$ is mono, making the middle counit invertible
\end{itemize}
thus giving the diagram

\[
\xymatrix{0\ar[rr]\ar@{=}[dd] &  & \textnormal{dom}\neg\overline{p}\ar@{=}[dd]\ar[rr] &  & \textnormal{dom}\neg p\ar[dddd]^{\neg p}\\
\\
\mathbf{0}\ar@{=}[d]\ar[rr] &  & \textnormal{dom}\neg\overline{p}\ar[dd] & \textnormal{dpb}\\
\mathbf{0}\ar[d]_{!} & \textnormal{dpb}\\
E\ar[rr]_{\overline{p}}\ar@/_{2pc}/[rrrr]_{p} &  & \textnormal{im}p\ar[rr]_{\neg\neg p} &  & B
}
\]
and we notice that the pullback of $\neg p$ and $\neg\neg p$ is
$0$, so that $\textnormal{dom}\neg\overline{p}=0$ and hence $\neg\overline{p}$
is the initial map.
\end{proof}
We recall the notion of an orthogonal factorization system.
\begin{defn}
An orthogonal factorization system (OFS) on a category $\mathcal{E}$
consists of two classes of morphisms $\mathscr{L}$ and $\mathscr{R}$
such that
\begin{itemize}
\item every map $f$ decomposes as $f=r\cdot l$ with $r\in\mathscr{R}$
and $l\in\mathscr{L}$
\item given any other decomposition $f=r'\cdot l'$, there exists a unique
isomorphism $\alpha$ rendering commutative
\[
\xymatrix{ & M\ar[rd]^{r}\ar@{..>}[dd]^{\alpha}\\
A\ar[ru]^{l}\ar[rd]_{l'} &  & B\\
 & M'\ar[ru]_{r'}
}
\]
\item $\mathscr{L}$ and $\mathscr{R}$ are closed under composition and
contain all isomorphisms
\end{itemize}
\end{defn}

\begin{thm}
Let $\mathcal{E}$ be a locally cartesian closed category with a strict
initial object. Then $\left(\mathsf{dense},\mathsf{closed}\right)$
defines an orthogonal factorization system on $\mathcal{E}$.
\end{thm}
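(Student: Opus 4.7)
The existence of factorizations is given by Theorem \ref{facexist}, so the remaining axioms to verify are: both classes contain all isomorphisms, dense maps are left orthogonal to closed maps with a unique diagonal, and both classes are closed under composition. Uniqueness of the factorization up to unique iso will then follow formally from orthogonality.

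For isomorphisms, if $f$ is invertible then as an object of $\mathcal{E}/B$ it is isomorphic to $\mathrm{id}_{B}$, and strictness of $\mathbf{0}$ yields $\neg f \cong \mathbf{0}$ and hence $\neg\neg f \cong \mathrm{id}_{B} \cong f$, so $f$ is both dense and closed. The core technical step is the following lemma: if a dense map $\ell \colon A \to B$ factors as $\ell = m k$ through a closed subobject $m \colon C \hookrightarrow B$, then $m$ is invertible. I would prove this by observing that $\neg$ is order-reversing on $\mathcal{E}/B$ --- any object disjoint from $m$ is disjoint from $\ell = m k$ by strict initiality (the two defining pullback squares stack), so the universal property of $\neg \ell$ yields a map $\neg m \to \neg \ell$ in $\mathcal{E}/B$. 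Density means $\neg \ell \cong \mathbf{0}$ (equivalent to $\neg \neg \ell \cong \mathrm{id}$ via Lemma \ref{tplneg}), so $\neg m \cong \mathbf{0}$ by strictness, and closedness of $m$ forces $m \cong \neg \neg m \cong \neg \mathbf{0} \cong \mathrm{id}_{B}$.

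For orthogonality, given a commutative square with vertical legs $\ell$ dense and $r \colon C \to D$ closed, form the pullback $r' \colon P \to B$ of $r$ along the lower side $v \colon B \to D$. Since $\Pi$ satisfies Beck--Chevalley along pullback squares in any LCCC and pullback preserves strict initials, negation commutes with pullback, giving $\neg\neg r' \cong v^{*}\neg\neg r \cong v^{*} r \cong r'$; hence $r'$ is closed. The universal property of the pullback factors $\ell$ through $r'$, so the key lemma forces $r'$ to be invertible, and composing the pullback projection $P \to C$ with $(r')^{-1}$ produces the diagonal filler, which is unique because $r$ (as a double negation) is monic. Applying this to two rival factorizations of the same map produces the comparison isomorphism required for uniqueness of the factorization.

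Closure under composition is then automatic: once factorizations exist and the two classes are mutually orthogonal, the standard OFS argument shows $\mathsf{dense} = {}^{\perp} \mathsf{closed}$ and $\mathsf{closed} = \mathsf{dense}^{\perp}$, and these orthogonality classes are routinely closed under composition. The main obstacle is concentrated in the key lemma and the Beck--Chevalley identification $\neg(v^{*} r) \cong v^{*} \neg r$; everything else is bookkeeping against the negation calculus already developed in the paper.
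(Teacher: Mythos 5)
Your proposal is correct, but it takes a genuinely different route from the paper. The paper's proof handles only the uniqueness clause of its definition of OFS, and does so by a direct distributivity-pullback computation: given any factorization $p=c\cdot d$ with $d$ dense and $c$ closed, the horizontal pasting lemma for distributivity pullbacks expresses $\neg p$ as the composite of the dpb computing $\neg d\cong 0$ with the dpb computing $\neg c$, whence $\neg p\cong\neg c$, $\neg\neg p\cong\neg\neg c\cong c$, and $d\cong\overline{p}$ since $c$ is monic. You instead establish the full lifting property $\mathsf{dense}\perp\mathsf{closed}$: your key lemma (a dense map cannot factor through a proper closed subobject, proved via the contravariance of $\neg$ and strictness of $\mathbf{0}$) together with pullback-stability of closed maps via Beck--Chevalley produces the unique diagonal filler, and uniqueness of factorizations and closure under composition then follow by the standard orthogonality formalism. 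Both arguments are sound; yours is longer but proves strictly more (the lifting property is not explicitly verified anywhere in the paper, and your Beck--Chevalley identification $\neg(v^{*}r)\cong v^{*}\neg r$ gives pullback-stability of closed maps directly, which the paper only later deduces from the right-class property), while the paper's is a shorter computation that leans on the already-developed dpb pasting machinery and leaves the remaining OFS axioms implicit. One small point worth making explicit in your write-up: closed maps are automatically monomorphisms (being double negations, hence of the form $\Pi_{g}$ applied to the mono $\mathbf{0}\to\mathrm{dom}(\neg g)$), which is what licenses both the application of your key lemma to $r'$ and the uniqueness of the diagonal.
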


\begin{proof}
We need only check that the factorization is unique (that is, it is
necessarily the same as that given earlier), so let us suppose we
can factor $p=c\cdot d$ where $c$ is closed and $d$ is dense. We
construct $\neg p$ as the composite of two dpbs 
\[
\xymatrix{0\ar@{=}[rr]\ar@{=}[d] &  & 0\ar@{=}[d]\ar[rr] &  & B\setminus\textnormal{im}p\ar[ddd]^{\neg p}\\
0\ar@{=}[d]\ar@{=}[rr] &  & 0\ar[dd]\\
0\ar[d] & \textnormal{dpb} &  & \textnormal{dpb}\\
E\ar[rr]_{d} &  & M\ar[rr]_{c} &  & B
}
\]
This means that $\neg p\cong\neg c$ that is $\neg\neg p\cong\neg\neg c\cong c$,
and since $c$ is mono we see that $d\cong\overline{p}$ (accounting
for the isomorphism in the slice category).
\end{proof}
\begin{prop}
The factorization $\left(\mathsf{dense},\mathsf{closed}\right)$ is
pullback stable.
\end{prop}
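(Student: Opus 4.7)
The plan is as follows. Closed maps form the right class of an orthogonal factorization system, hence are automatically stable under pullback, so the content reduces to showing that dense maps are pullback stable. By Lemma \ref{tplneg} together with the definitions, $p\colon E\to B$ is dense if and only if $\neg p$ is isomorphic to the initial map $\mathbf{0}\to B$ in $\mathcal{E}/B$ (since $\neg p\cong\mathbf{0}\to B$ gives $\neg\neg p\cong\mathrm{id}_B$, and conversely if $\neg\neg p\cong\mathrm{id}_B$ then $\neg p\cong\neg\neg\neg p\cong\neg\mathrm{id}_B=\mathbf{0}\to B$).

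Given a pullback square
\[
\xymatrix{E'\ar[r]^{g}\ar[d]_{p'} & E\ar[d]^{p}\\
X\ar[r]_{f} & B}
\]
the task reduces to proving the stability formula $\Delta_f(\neg p)\cong\neg p'$: once this holds, if $p$ is dense then $\neg p'\cong\Delta_f(\mathbf{0}\to B)\cong(\mathbf{0}\to X)$ by strictness of the initial object, so that $p'$ is dense.

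To establish $\Delta_f(\neg p)\cong\neg p'$, the cleanest route is the Beck--Chevalley isomorphism $\Delta_f\Pi_p\cong\Pi_{p'}\Delta_g$, which holds in any locally cartesian closed category; applying it to the initial object of $\mathcal{E}/E$ and using $\Delta_g(\mathbf{0}\to E)\cong(\mathbf{0}\to E')$ (strictness of $\mathbf{0}$) gives the claim. A more hands-on alternative, in the spirit of the paper, is to pull back the distributivity pullback defining $\neg p$ along $f$: strictness of $\mathbf{0}$ forces the top and left vertices of the pulled-back diagram to remain $\mathbf{0}$, and one then checks directly that this diagram satisfies the universal property of the distributivity pullback around $\mathbf{0}\to E'$ and $p'$. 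The main obstacle is justifying exactly this stability of the distributivity pullback; the cleanest path is to derive the Beck--Chevalley isomorphism for $\Pi$ as the adjoint mate of the trivial isomorphism $\Delta_{p'}\Delta_f\cong\Delta_g\Delta_p$ (pullback pasting) under the adjunctions $\Delta_p\dashv\Pi_p$ and $\Delta_{p'}\dashv\Pi_{p'}$.
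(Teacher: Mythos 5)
Your argument is correct, but your primary route is genuinely different from the paper's. Like the paper, you dispose of the closed class by the general fact that the right class of an orthogonal factorization system is pullback stable, and you correctly reformulate density as $\neg p$ being the initial map using Lemma \ref{tplneg}. Where you differ is in how you pass from ``$\neg d$ initial'' to ``$\neg d'$ initial'': you prove the full base-change formula $\Delta_f(\neg p)\cong\neg p'$ via Beck--Chevalley for $\Pi$. This is true in any locally cartesian closed category and buys more than the proposition needs --- negation, hence double negation, commutes with pullback, which gives stability of both classes uniformly --- but be aware that the mate of the isomorphism $\Delta_{p'}\Delta_f\cong\Delta_g\Delta_p$ is not automatically invertible; Beck--Chevalley for $\Pi$ still requires an argument (e.g.\ via Beck--Chevalley for $\Sigma$ and representability), which is exactly the obstacle you flag at the end. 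The paper sidesteps this entirely: it pastes the distributivity pullback defining $\neg d'$ on top of the pullback square relating $d'$ to $d$, observes that the outer rectangle is then a pullback around $\mathbf{0}\to\textnormal{dom}(d)$ and $d$, and so obtains from the universal property of the dpb defining $\neg d$ a comparison map $\textnormal{dom}\,\neg d'\to\textnormal{dom}\,\neg d\cong\mathbf{0}$; strictness of $\mathbf{0}$ then forces $\textnormal{dom}\,\neg d'\cong\mathbf{0}$. Because the target is initial, a one-directional comparison map suffices, so the paper never needs any stability statement for distributivity pullbacks --- a strictly lighter argument than your ``hands-on alternative'' of pulling the dpb back along $f$ and re-verifying its universal property.
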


\begin{proof}
Suppose we are given a dense map $d$ which pulls back into a map
$d'$. We can see that $d'$ is also dense by constructing the diagram
on the left
\[
\xymatrix{0\ar@{=}[d]\ar[rr] &  & \textnormal{dom}\neg d'\ar[dd]^{\neg d'} &  & 0\ar[rr]\ar@{..>}[d] &  & \textnormal{dom}\neg d'\ar@{..>}[d]\\
0\ar[d] & \textnormal{dpb} &  &  & 0\ar@{=}[d]\ar[rr] &  & \textnormal{dom}\neg d\ar[dd]_{\neg d}\\
\bullet\ar[rr]_{d'}\ar[d] &  & \bullet\ar[d] & = & 0\ar[d] & \textnormal{dpb}\\
\bullet\ar[rr]_{d} &  & \bullet &  & \bullet\ar[rr]_{d} &  & \bullet
}
\]
giving induced maps into the dpb on the right above. As $\textnormal{dom}\neg d\cong0$,
and the initial is strict, we have that $\neg d'$ is also the initial
map.
\end{proof}

\subsection{Lawvere-Tierney axioms}

It is worth pointing out that the double negation (seen as a closure)
satisfies all of the Lawvere-Tierney axioms one would expect, with
the exception that we are only dealing with a locally cartesian closed
category with a strict initial object. Hence this gives examples of
Lawvere-Tierney topologies which are not necessarily a topos.

In the following proposition we restrict to monomorphisms to make
the axioms as similar to those of a Lawvere-Tierney topology as possible.
\begin{prop}
Suppose $\mathcal{E}$ is a locally cartesian closed category with
a strict initial object. Suppose $A\to X$ and $B\to X$ are monomorphisms.
Define the closure of $A$ (as a subobject of $X$) as $\overline{A}:=\neg\neg A$.
Then
\begin{itemize}
\item there exists a monomorphism $A\to\overline{A}$ in $\mathcal{E}/X$;
\item we have $\overline{\overline{A}}\cong\overline{A}$
\item we have $\overline{A}\cap\overline{B}\cong\overline{A\cap B}$;
\item closed maps are pullback stable;
\item we have an assignation sending any monomorphism $A\to B$ in $\mathcal{E}/X$
to a monomorphism $\overline{A}\to\overline{B}$ in $\mathcal{E}/X$
\end{itemize}
\end{prop}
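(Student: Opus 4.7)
The plan is to derive all five bullets from the $(\mathsf{dense},\mathsf{closed})$ orthogonal factorization system established above, together with the triple negation Lemma~\ref{tplneg} and pullback stability of the dense maps already shown in this section.

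For the first bullet, I would factor the given mono as $A \to \overline{A} \to X$; since the closed part is mono (all $\neg p$ are mono, as $\mathbf{0}\to E$ is mono into a strict initial), and the composite $A\to X$ is mono, the dense part $A \to \overline{A}$ is automatically mono and furnishes the required morphism in $\mathcal{E}/X$. The second bullet is immediate from Lemma~\ref{tplneg}: applying $\neg$ to $\neg\neg\neg p \cong \neg p$ yields $\neg\neg\neg\neg p \cong \neg\neg p$, i.e.\ $\overline{\overline{A}} \cong \overline{A}$. For the fifth bullet, given a mono $A \hookrightarrow B$ in $\mathcal{E}/X$, I would form the orthogonality square with dense left edge $A \to \overline{A}$, closed right edge $\overline{B} \to X$, and top-right composite $A \to B \to \overline{B}$ (this is a commutative square in $\mathcal{E}$ since both diagonals agree with $A \to X$). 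The unique diagonal $\overline{A} \to \overline{B}$ is then a morphism in $\mathcal{E}/X$ by the lower triangle, and is mono because its composite with $\overline{B} \hookrightarrow X$ is the mono $\overline{A} \hookrightarrow X$. For the fourth bullet I would appeal to the standard fact that the right class of any OFS is pullback stable, which is routinely checked by translating a lifting problem against the pulled back map back to one against the original via the universal property of pullbacks.

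The main obstacle, and the only genuinely non-formal step, is the third bullet $\overline{A \cap B} \cong \overline{A} \cap \overline{B}$. My plan is to exhibit the composite $A \cap B \to \overline{A} \cap \overline{B} \to X$ as a $(\mathsf{dense},\mathsf{closed})$ factorization and then invoke uniqueness. Closedness of $\overline{A} \cap \overline{B} \to X$ follows from bullet 4 and closure of the right class under composition: the pullback of the closed map $\overline{B} \to X$ along $\overline{A} \to X$ produces a closed map $\overline{A} \cap \overline{B} \to \overline{A}$, which composes with the closed $\overline{A} \to X$. Density of $A \cap B \to \overline{A} \cap \overline{B}$ will be shown by decomposing it as $A \cap B \to A \times_X \overline{B} \to \overline{A} \cap \overline{B}$. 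The second factor is the pullback of the dense map $A \to \overline{A}$ along $\overline{A} \cap \overline{B} \to \overline{A}$, and the first factor is the pullback of the dense map $B \to \overline{B}$ along $A \times_X \overline{B} \to \overline{B}$ (using $B \hookrightarrow \overline{B}$ mono to identify $A \times_X B \cong (A \times_X \overline{B}) \times_{\overline{B}} B$). Pullback stability of dense maps then makes each factor dense, hence so is the composite; uniqueness of the factorization closes the argument.
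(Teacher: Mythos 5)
Your proposal is correct, and for four of the five bullets it follows the paper's own route: the first component of the $(\mathsf{dense},\mathsf{closed})$ factorization for $A\to\overline{A}$, Lemma~\ref{tplneg} for idempotency, the right-class-of-an-OFS argument for pullback stability of closed maps, and, for $\overline{A}\cap\overline{B}\cong\overline{A\cap B}$, exhibiting $A\cap B\to\overline{A}\cap\overline{B}\to X$ as a dense--closed factorization and invoking uniqueness --- your explicit two-step decomposition through $A\times_X\overline{B}$ is exactly the content of the paper's $3\times 3$ pullback grid, just written out arrow by arrow, and correctly leans on the separately proved pullback stability of \emph{dense} maps (which, unlike stability of the right class, is not automatic for an OFS). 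The one genuine divergence is the final bullet: the paper constructs the map on closures ``by hand,'' first producing $\neg B\to\neg A$ from the disjointness of $\neg B$ and $A$ via the universal property of the distributivity pullback defining $\neg A$, and then iterating the negation to get $\overline{A}\to\overline{B}$; you instead invoke the unique diagonal fill-in for the square with dense edge $A\to\overline{A}$ and closed edge $\overline{B}\to X$. Your route is shorter and makes functoriality of the closure essentially formal, at the cost of using the (standard, but not stated in the paper) fact that the factorization-style definition of an OFS implies the orthogonal lifting property; the paper's route is more self-contained, staying entirely inside the distributivity-pullback calculus, and as a by-product records the contravariant assignment $A\mapsto\neg A$ on subobjects, which is of independent interest for the negation operator. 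Your observation that $\mathbf{0}\to E$ is mono by strictness, so that every $\neg p$ and hence every $\neg\neg p$ is mono, is a detail the paper leaves implicit and is worth keeping.
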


\begin{proof}
The map $A\to\overline{A}$ is simply the first component of the dense-closed
factorization. That $\overline{\overline{A}}\cong\overline{A}$ follows
from the triple negation being isomorphic to the single negation as
shown in Lemma \ref{tplneg}. Closed maps are pullback stable as these
are the right class of a factorization system. To show that $\overline{A}\cap\overline{B}=\overline{A\cap B}$
we construct the diagram

\[
\xymatrix{A\cap B\ar[rr]\ar[rdrd]\ar[dd] &  & \bullet\ar[dd]\ar[rr] &  & B\ar[dd]\\
\\
\bullet\ar[rr]\ar[dd] &  & \overline{A}\cap\overline{B}\ar[rrdd]\ar[rr]\ar[dd] &  & \overline{B}\ar[dd]\\
\\
A\ar[rr] &  & \overline{A}\ar[rr] &  & X
}
\]
and note the diagonal composite is a dense-closed factorization, since
the dense and closed maps are pullback stable. As the factorization
is unique we may make the identification $\overline{A}\cap\overline{B}\cong\overline{A\cap B}$.
Finally, given any map $A\to B\to X$ equal to $A\to X$ (that is
a map in the slice, also since $B\to X$ and $A\to X$ are monos,
so is $A\to B$) we recover a map $\neg B\to\neg A$ as on the left
below
\[
\xymatrix{0\ar[rrrr]\ar[rdd] &  &  &  & \neg B\ar[dl]\ar@/^{2pc}/[dddl]\\
 & 0\ar@{=}[d]\ar[rr] &  & \neg A\ar[dd] &  & 0\ar@{=}[rr]\ar[d] &  & 0\ar[rr]\ar[d] &  & \neg B\ar[d]\\
 & 0\ar[d] &  &  &  & A\ar[rr] &  & B\ar[rr] &  & X\\
 & A\ar[rr] &  & X
}
\]
where we used that $\neg B$ is as disjoint from $A$ as shown on
the right above. Repeating this process once more we recover the map
$\overline{A}\to\overline{B}$ between the closures.
\end{proof}

\subsection{Decidability as a dense monomorphism}

The following defines the notion of decidability we will use in this
paper. This is required later for derivatives.
\begin{defn}
\label{decidable} Suppose $\mathcal{E}$ is a locally cartesian closed
category with disjoint finite sums. Given a monomorphism $A\to X$
and its negation denoted as a map $\neg A\to X$, we call the induced
map $A+\neg A\to X$ the \emph{decidability map} of $A\to X$. We
say that $A\to X$ is \emph{decidable} when this decidability map
is invertible.
\end{defn}

\begin{prop}
Every decidability map $A+\neg A\to X$ is a dense monomorphism.
\end{prop}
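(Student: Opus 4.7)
The plan is to verify the two conditions separately: first that $A + \neg A \to X$ is a monomorphism, then that it is dense.

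For the monomorphism claim, I would use that $A$ and $\neg A$ are disjoint subobjects of $X$, which is immediate from the construction of $\neg A$ as the right leg of a distributivity pullback forcing $A \times_X \neg A \cong \mathbf{0}$. Since $\mathcal{E}$ is locally cartesian closed, pulling back distributes over finite sums, so the kernel pair of the decidability map decomposes into four pieces which collapse to $A + \mathbf{0} + \mathbf{0} + \neg A \cong A + \neg A$ (using also that $A, \neg A$ are themselves monic), establishing the monomorphism property.

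For density it suffices to show $\neg(A + \neg A) \cong \mathbf{0}$ as an object over $X$, since then $\neg\neg(A + \neg A) \cong \neg(\mathbf{0} \to X) \cong \mathrm{id}_X$: any $Y \to X$ pulls back trivially along the strict initial, making $\mathrm{id}_X$ the terminal dpb. To establish $\neg(A + \neg A) \cong \mathbf{0}$ I would invoke its universal property as a distributivity pullback. Given any candidate $Y \to X$ with $Y \times_X (A + \neg A) \cong \mathbf{0}$, distributivity decomposes this pullback as $Y \times_X A + Y \times_X \neg A$, and disjointness of coproducts together with strictness of the initial force both summands to be $\mathbf{0}$. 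Hence $Y$ is disjoint from both $A$ and $\neg A$, and the universal properties of $\neg A$ and $\neg\neg A$ supply factorizations $Y \to \neg A$ and $Y \to \neg\neg A$ over $X$. Since $\neg\neg A$ is by its very construction disjoint from $\neg A$, these two factorizations force $Y$ to factor through $\neg A \times_X \neg\neg A \cong \mathbf{0}$, so $Y \cong \mathbf{0}$.

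The main obstacle is the bookkeeping of which squares are ordinary pullbacks and which are distributivity pullbacks, so that the correct universal properties can be invoked at each step. Once the disjointness of $\neg A$ and $\neg\neg A$ is identified as the reason the common factorization collapses to $\mathbf{0}$, the rest of the argument is formal.
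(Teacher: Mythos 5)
Your proposal is correct and follows essentially the same route as the paper: the monomorphism part via summing the four pullbacks $A\times_X A\cong A$, $A\times_X\neg A\cong\mathbf{0}$, $\neg A\times_X A\cong\mathbf{0}$, $\neg A\times_X\neg A\cong\neg A$ using extensivity, and density by showing any $Y\to X$ disjoint from $A+\neg A$ factors through both $\neg A$ and $\neg\neg A$, whose disjointness forces $Y\cong\mathbf{0}$ by strictness of the initial object. The only cosmetic difference is that you argue for an arbitrary candidate $Y$ rather than directly for the apex $P=\neg(A+\neg A)$ of the defining distributivity pullback, which amounts to the same thing.
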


\begin{proof}
Consider a monomorphism $A\to X$, and its negation 
\[
\xymatrix{0\ar[rr]\ar@{=}[d] &  & \neg A\ar[dd]\\
0\ar[d]\\
A\ar[rr] &  & X
}
\]
as in the distributivity pullback above. The decidability map $A+\neg A\to X$
is exhibited as a monomorphism by summing the four pullbacks
\[
\xymatrix{0\ar[rr]\ar[dd] &  & \neg A\ar[dd] & A\ar@{=}[rr]\ar@{=}[dd] &  & A\ar[dd]\\
\\
A\ar[rr] &  & X & A\ar[rr] &  & X
}
\]
and
\[
\xymatrix{\neg A\ar@{=}[rr]\ar@{=}[dd] &  & \neg A\ar[dd] & 0\ar[rr]\ar[dd] &  & A\ar[dd]\\
\\
\neg A\ar[rr] &  & X & \neg A\ar[rr] &  & X
}
\]
using that $\mathcal{E}$ is extensive. To check the map $A+\neg A\to X$
is dense, we consider the dpb
\[
\xymatrix{0\ar[rr]\ar@{=}[d] &  & P\ar[dd]\\
0\ar[d]\\
A+\neg A\ar[rr] &  & X
}
\]
and note we can restrict the pullback along both inclusions from $A$
and $\neg A$ as below
\[
\xymatrix{0\ar[dd]\ar@{=}[rr] &  & 0\ar[rr]\ar@{=}[d] &  & P\ar[dd] & 0\ar[dd]\ar@{=}[rr] &  & 0\ar[rr]\ar@{=}[d] &  & P\ar[dd]\\
 &  & 0\ar[d] &  &  &  &  & 0\ar[d]\\
A\ar[rr] &  & A+\neg A\ar[rr] &  & X & \neg A\ar[rr] &  & A+\neg A\ar[rr] &  & X
}
\]
By definition of $\neg A$ and $\neg\neg A$ in terms distributivity
pullbacks, we have induced maps $P\to\neg A$ and $P\to\neg\neg A$
respectively from these two outside pullbacks above. Finally to see
that $P$ is initial, note that we have the induced map into the pullback
\[
\xymatrix{P\ar@{..>}[rd]\ar[rdrr]\ar[rddd]\\
 & 0\ar[dd]\ar[rr] &  & \neg\neg A\ar[dd]\\
\\
 & \neg A\ar[rr] &  & X
}
\]
and this map $P\to0$ exhibits $P$ as initial by strictness.
\end{proof}
\begin{example}
Consider the diagonal map $E\to E\times E$ which is a monomorphism.
The negation $E\boxtimes E\to E$ is the object of distinct pairs.
This $E\to E\times E$ is decidable when equality is decidable, that
is $x=y$ or $x\neq y$ for all $x,y\in E$.
\end{example}

\subsection{Dense-mono localization}

In this section we construct the localization of $\mathcal{E}$ by
the class $\mathcal{W}$ of dense monomorphisms. We then show that
this localization $\mathcal{E}\left[\mathcal{W}^{-1}\right]$ inherits
local cartesian closedness from $\mathcal{E}$.
\begin{prop}
Let $\mathcal{E}$ be a locally cartesian closed category with a strict
initial object. Then the localization $\mathcal{E}\left[\mathcal{W}^{-1}\right]$
by the class of $\mathcal{W}$ dense monomorphisms is a calculus of
right fractions, and is equivalent to the category with objects of
$\mathcal{E}$ and morphisms $X\nrightarrow Y$ given by spans
\[
\xymatrix{ & M\ar[rd]^{f}\ar[ld]_{w}\\
X &  & Y
}
\]
with $w\colon M\to X$ a dense monomorphism and $f\colon M\to Y$
any morphism of $\mathcal{E}$. Composition is by pullback, and two
spans $\left(f,w\right)$ and $\left(f',w'\right)$ are identified
when there exists a commuting diagram

\[
\xymatrix{ & M\ar[rd]^{f}\ar[ld]_{w}\\
X & T\ar[r]\ar[l]\ar[d]\ar[u] & Y\\
 & M'\ar[ru]_{w'}\ar[ul]^{f'}
}
\]
\end{prop}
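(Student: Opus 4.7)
The strategy is to verify the standard Gabriel--Zisman axioms for a calculus of right fractions on the class $\mathcal{W}$ of dense monomorphisms, after which the explicit description as a span category will follow from the general theorem.

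First I would check that $\mathcal{W}$ contains all identities and is closed under composition. Identities lie in $\mathcal{W}$ because the negation of the identity is the unique map out of the initial object (by strictness of $\mathbf{0}$, so $\neg\textnormal{id}\cong\mathbf{0}$ and hence $\neg\neg\textnormal{id}\cong\textnormal{id}$). Composition closure holds because monomorphisms are closed under composition, and because dense maps compose as the left class of the orthogonal factorization system established in the preceding theorem.

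Next I would verify the Ore condition. Given a morphism $f\colon Y\to X$ and a dense monomorphism $w\colon W\to X$, I form the pullback $Y\times_X W$; the induced map $w'\colon Y\times_X W\to Y$ is monic (as the pullback of a monomorphism) and dense by the pullback-stability of the dense--closed factorization already proven. Hence $w'\in\mathcal{W}$ and provides the required roof. For the cancellation condition, suppose $w\cdot g=w\cdot h$ with $w\in\mathcal{W}$; since $w$ is in particular monic, $g=h$, so $\textnormal{id}$ trivially equalizes them.

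With these three axioms in place, the classical Gabriel--Zisman construction presents $\mathcal{E}[\mathcal{W}^{-1}]$ as the category of equivalence classes of spans $(w,f)$ with $w\in\mathcal{W}$, composition defined by pullback (well-defined precisely because $\mathcal{W}$ is pullback-stable), and with the stated equivalence relation generated by the roof diagrams displayed in the statement. The only non-formal content --- namely closure of dense monomorphisms under composition and under pullback --- has already been handled by the work on the dense--closed factorization system in the preceding subsections, so the proof reduces to a formal verification and cross-reference.
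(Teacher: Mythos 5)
Your proposal is correct and follows the same route the paper intends: the paper's own proof simply asserts that the right-fraction axioms are ``not hard to check'' given pullback-stability of dense monomorphisms, and then invokes the standard Gabriel--Zisman description. You have merely filled in the routine verifications (identities, composition via the OFS left class, the Ore condition via pullback-stability, and cancellation via monicity) that the paper leaves implicit.
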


\begin{proof}
It is not hard to check this satisfies the axioms needed for a calculus
of right fractions, given that the dense monomorphisms are pullback
stable.

As this localization is a calculus of right fractions, the functor
$\mathcal{E}\to\mathcal{E}\left[\mathcal{W}^{-1}\right]$ is left
exact (preserves finite limits). In fact, more is true: this functor
preserves distributivity pullbacks. This is evident from the following
proposition.
\end{proof}
\begin{prop}
Let $\mathcal{E}$ be a locally cartesian closed category with a strict
initial object. Then the localization $\mathcal{E}\left[\mathcal{W}^{-1}\right]$
is locally cartesian closed.
\end{prop}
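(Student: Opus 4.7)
The plan is to transport the base-change adjunctions $\Delta_{p}\dashv\Pi_{p}$ from $\mathcal{E}$ to $\mathcal{E}[\mathcal{W}^{-1}]$ along the localization functor, invoking the universal property of the calculus of right fractions.

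First I would identify each slice of the localization with a localization of the slice: for every object $B$, show that $\mathcal{E}[\mathcal{W}^{-1}]/B\simeq(\mathcal{E}/B)[\mathcal{W}_{B}^{-1}]$, where $\mathcal{W}_{B}$ consists of those morphisms in $\mathcal{E}/B$ whose underlying map in $\mathcal{E}$ is a dense monomorphism. Since dense monomorphisms are pullback stable, $\mathcal{W}_{B}$ admits a calculus of right fractions in $\mathcal{E}/B$, and the equivalence is a direct comparison of the resulting span descriptions.

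Next, for a morphism $p\colon E\to B$ in $\mathcal{E}$ I would descend the adjunction $\Delta_{p}\dashv\Pi_{p}$ to an adjunction between $(\mathcal{E}/B)[\mathcal{W}_{B}^{-1}]$ and $(\mathcal{E}/E)[\mathcal{W}_{E}^{-1}]$. Descent of $\Delta_{p}$ is immediate from pullback stability. The substantive step is descent of $\Pi_{p}$, which reduces to the key lemma that $\Pi_{p}$ carries dense monomorphisms to dense monomorphisms; given this, both functors descend uniquely and the unit and counit descend along with them, yielding the required adjunction on the localized slices. For a general morphism $p$ in $\mathcal{E}[\mathcal{W}^{-1}]$ represented by a span $E\xleftarrow{w}M\xrightarrow{f}B$, pullback along $p$ factors as pullback along $f$ followed by the equivalence induced by the inverted $w$, so the right adjoint assembles from $\Pi_{f}$ (descended as above) and the inverse equivalence.

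The hard part will be the lemma that $\Pi_{p}$ preserves dense monomorphisms. Preservation of monomorphisms is the earlier proposition, so the content is in showing $\neg(\Pi_{p}w)\cong\mathbf{0}$ for $w\colon M\to N$ dense over $E$. I would analyse the domain of $\neg(\Pi_{p}w)$ via its universal property: it classifies maps $Y\to\Pi_{p}N$ whose pullback $Y\times_{\Pi_{p}N}\Pi_{p}M$ is initial. Base-changing such a $Y$ along $p$ and using density of $w$, one sees $\Delta_{p}Y\cong\mathbf{0}$, so $Y$ lies over $\neg p\hookrightarrow B$. The Beck--Chevalley identifications $\Pi_{p}M\times_{B}\neg p\cong\neg p\cong\Pi_{p}N\times_{B}\neg p$, coming from the strictness pullback of $p$ along $\neg p\to B$ (where every object of $\mathcal{E}/\mathbf{0}$ is initial), then show that the canonical inclusion $\neg p\hookrightarrow\Pi_{p}N$ factors through $\Pi_{p}M$. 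Consequently $\neg(\Pi_{p}w)$, factoring through $\neg p$, in fact factors through $\Pi_{p}M$; disjointness from $\Pi_{p}M$ then forces $\neg(\Pi_{p}w)\cong\mathbf{0}$. This chain of distributivity-pullback manipulations is the delicate step, after which the remaining descent argument is formal.
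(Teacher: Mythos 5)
Your overall strategy---descending the adjunctions $\Delta_{p}\dashv\Pi_{p}$ along the localization---is genuinely different from the paper, which instead constructs distributivity pullbacks directly in the span presentation of $\mathcal{E}\left[\mathcal{W}^{-1}\right]$. Unfortunately your route founders on the key lemma: it is \emph{not} true that $\Pi_{p}$ preserves dense monomorphisms. That statement is precisely the double negation shift $\left(\forall e.\,\neg\neg\phi(e)\right)\Rightarrow\neg\neg\left(\forall e.\,\phi(e)\right)$ in the internal logic, which fails intuitionistically. Concretely, let $\mathcal{E}$ be presheaves on the poset of finite binary sequences ordered by reverse extension (a topos, hence locally cartesian closed with strict initial object), let $B=\mathbf{1}$, let $E$ be the constant presheaf at $\mathbb{N}$ with $p\colon E\to\mathbf{1}$, and let $M\subseteq E$ be given by $M(s)=\{k\in\mathbb{N}\mid k<\mathrm{len}(s)\}$. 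Then $M\to E$ is a dense monomorphism (any $k$ lies in $M(r)$ for every sufficiently long $r$ below any given node), yet $\Pi_{p}M=\forall_{E}M=\mathbf{0}$, since no node $s$ satisfies $E(t)\subseteq M(t)$ for all $t\leq s$. The map $\mathbf{0}\to\mathbf{1}$ is neither dense nor invertible in $\mathcal{E}\left[\mathcal{W}^{-1}\right]$, so the composite of $\Pi_{p}$ with the localization functor does not invert $\mathcal{W}_{E}$, and the adjunction cannot be descended as you propose.

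The precise step of your sketch that breaks is the inference from $Y\times_{\Pi_{p}N}\Pi_{p}M\cong\mathbf{0}$ to $\Delta_{p}Y\cong\mathbf{0}$. The hypothesis says only that no nonzero $Z\to Y$ admits a lift of its whole fibre $\Delta_{p}Z\to M$ over $N$; it does not say that $\Delta_{p}Y$ is disjoint from $M$ over $N$, which is what density of $w$ would need in order to kill $\Delta_{p}Y$. In the example above, with $N=E$ and $Y=\neg\left(\Pi_{p}w\right)\cong\mathbf{1}$, one has $\Delta_{p}Y=E$ and $\Delta_{p}Y\times_{N}M=M\neq\mathbf{0}$. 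The paper avoids this issue because its right adjoint in the localization is \emph{not} the image of $\Pi_{p}$: in its explicit distributivity pullback of spans one first pulls the data back along the dense monomorphisms (where the relevant counits are invertible because those legs are monic) and only then applies $\Pi$, so preservation of $\mathcal{W}$ by $\Pi_{p}$ is never needed. To repair your argument you would have to replace the descent of $\Pi_{p}$ by such a fraction-level construction, at which point you are essentially reproducing the paper's proof.
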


\begin{proof}
We will prove this by describing how to form the distributivity pullback
in the localization. First note that a pullback of two spans $\left(f,w\right)$
and $\left(z,g\right)$ is given as below
\[
\xymatrix{\bullet\ar[r]^{\left(1,f'z\right)}\ar[d]_{\left(1,g'w\right)} & \bullet\ar[d]^{\left(z,g\right)} &  &  & \bullet\ar[r]^{f'}\ar[d]_{g'} & \bullet\ar[d]^{g}\\
\bullet\ar[r]_{\left(f,w\right)} & \bullet &  &  & \bullet\ar[r]_{f} & \bullet
}
\]
where $f'$ and $g'$ are constructed as the pullback in $\mathcal{E}$
on the right above. We construct the distributivity pullback of a
pair of composable maps $\left(p,w\right)$ and $\left(u,z\right)$
as below
\[
\xymatrix{\bullet\ar[rr]^{\left(\widetilde{p},w'''z''\right)}\ar[d]_{\left(\widetilde{\widetilde{e}},1\right)} &  & \bullet\ar[dd]^{\left(\Pi_{p}u',1\right)}\\
\bullet\ar[d]_{\left(u,z\right)}\\
\bullet\ar[rr]_{\left(p,w\right)} &  & \bullet
}
\]
where the data comprising these spans are constructed as 
\[
\xymatrix{\bullet\ar[d]_{\widetilde{\widetilde{e}}=\Pi_{w''\widetilde{e}}}\ar@{}[rrd]|-{\textnormal{dpb}} &  &  & \bullet\ar[dl]_{\widetilde{e}=\Pi_{z'}e}\ar[lll]_{w'''}\\
\bullet\ar@{}[drr]|-{\textnormal{dpb}} &  & \bullet\ar[ll]_{w''=\Pi_{z}w'}\ar@{}[r]|-{\textnormal{dpb}} & \bullet\ar[r]^{\widetilde{p}}\ar[dl]^{e}\ar[u]^{z''} & \bullet\ar[ddl]^{\Pi_{p}u'}\\
\bullet\ar[d]_{u}\ar[u]^{z}\ar@{}[drr]|-{\textnormal{pb}} &  & \bullet\ar[d]_{u'}\ar[ll]_{w'}\ar[u]^{z'}\ar@{}[rd]|-{\textnormal{dpb}} & \;\\
\bullet &  & \bullet\ar[r]_{p}\ar[ll]^{w} & \bullet
}
\]
where we have used the fact that the two top distributivity pullbacks
have invertible counit maps (and thus may be written simply as squares),
as $z$ and $w''$ are monomorphisms. The verification of the universal
property is a simple but tedious exercise.
\end{proof}
Though not required to form the calculus of right fractions, the dense
monos come close to satisfying the 2-out-of-3 property in that it
holds on one side.
\begin{prop}
Suppose the composite $gf$ and $g$ are both dense mono, then $f$
is also dense mono.
\end{prop}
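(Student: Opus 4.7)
The plan is to show that $\neg f\cong(\mathbf{0}\to B)$, which is equivalent to $\neg\neg f\cong\textnormal{id}_B$ and hence to density of $f$, by realizing $\neg f$ as a pullback of $\neg(gf)$ along $g$ and then using that $\neg(gf)$ vanishes because $gf$ is dense.

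First, $f$ is automatically a monomorphism because $gf$ is. To compute $\neg f$, I would apply the horizontal pasting lemma for distributivity pullbacks to the decomposition $gf=g\circ f$ with the initial map $u=\,!\colon\mathbf{0}\to A$ on the bottom: the dpb around $!$ and $f$ produces $\neg f\colon\textnormal{dom}\,\neg f\to B$, and the dpb sitting above it (around the resulting map and $g$) produces $\neg(gf)=\Pi_g(\neg f)\colon\textnormal{dom}\,\neg(gf)\to C$. Since $g$ is a monomorphism, the proposition relating distributivity pullbacks to monomorphisms tells us the counit of this upper dpb is invertible, so the upper level collapses into an ordinary pullback square
\[
\xymatrix{\textnormal{dom}\,\neg f\ar[r]\ar[d]_{\neg f}\ar@{}[rd]|-{\textnormal{pb}} & \textnormal{dom}\,\neg(gf)\ar[d]^{\neg(gf)}\\ B\ar[r]_g & C}
\]

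Now since $gf$ is dense we have $\neg(gf)\cong\mathbf{0}_C$ (equivalent to $\neg\neg(gf)\cong\textnormal{id}_C$, by Lemma~\ref{tplneg}), so the right vertical map factors through the strict initial object. By strictness of $\mathbf{0}$ the top-left corner of the pullback is also initial, whence $\neg f\cong(\mathbf{0}\to B)$ and $f$ is dense as required.

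The only nontrivial step is the combined use of the horizontal pasting lemma and the dpb-mono simplification recalled in the background section; everything else reduces to strictness of the initial object. I do not expect any genuine obstacle, since both tools apply directly once the decomposition $gf=g\circ f$ is chosen.
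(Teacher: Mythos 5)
Your proof is correct and follows essentially the same route as the paper: both compute $\neg(gf)$ via the horizontal pasting lemma applied to the decomposition $gf=g\circ f$ with $!\colon\mathbf{0}\to A$ on the bottom, use that $g$ being mono makes the counit of the upper distributivity pullback invertible, and conclude $\textnormal{dom}\,\neg f\cong\mathbf{0}$ by strictness of the initial object since density of $gf$ forces $\textnormal{dom}\,\neg(gf)\cong\mathbf{0}$.
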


\begin{proof}
Suppose $gf$ is dense mono. Using strictness of the initial object,
the negation of $gf$ is given by the composite distributivity pullback
\[
\xymatrix{0\ar@{=}[d]\ar@{=}[rr] &  & 0\ar[d]\ar@{=}[rr] &  & 0\ar[ddd]\\
0\ar@{=}[d]\ar[rr] &  & B\setminus\textnormal{im}f\ar[dd]\\
0\ar[d]\\
A\ar[rr]_{f} &  & B\ar[rr]_{g} &  & C
}
\]
Further supposing $g$ is mono, the counit map $0\to B\setminus\textnormal{im}f$
is invertible, showing $f$ is dense.
\end{proof}
\begin{rem}
If $gf$ and $f$ are dense mono, then $g$ is dense since the dense
maps are a left class.
\end{rem}

\section{Terms of polynomials}

A fact that one often takes for granted in the category of sets, is
that terms of polynomials exist. For instance, every polynomial with
$I=J=\mathbf{1}$ has a corresponding polynomial functor of the form

\[
\mathcal{E}\to\mathcal{E}\colon X\mapsto\sum_{b\in B}X^{E_{b}}
\]
The coefficient of $x^{\mathbf{n}}$ for a natural number $\text{\ensuremath{\mathbf{n}}}=\left\{ 1,...,n\right\} $
is then the set of fibres of size $n$ i.e. $\left\{ E_{b}:\left|E_{b}\right|=n\right\} $.
In this section we generalize the notion of ``terms of polynomials''
from the category of sets $\mathbf{Set}$, to a general locally cartesian
closed category $\mathcal{E}$.

Perhaps surprisingly, we will see that terms of polynomials are constructed
from the negation operator of the preceding section. The simplest
example of this is the constant term of polynomial (that is the fibres
of size zero), where for a polynomial as on the left below
\[
\xymatrix{ & E\ar[ld]\ar[r]^{p} & B\ar[rd] &  & \ar@{}[d]|-{\overset{\textnormal{const}}{\mapsto}} &  & 0\ar[ld]\ar[rr]^{\Delta_{\neg p}\left(p\right)} &  & B\setminus\textnormal{im}p\ar[rd]\\
\mathbf{1} &  &  & \mathbf{1} & \; & \mathbf{1} &  &  &  & \mathbf{1}
}
\]
the constant term is given by the right above, constructed by pulling
back along negation. Using this negation, we will show how one constructs
terms of order $n$, however the existence of terms of a general order
$F\in\mathcal{E}$ is left as an open question.

\subsection{Monomials and homogeneous terms}

We firstly observe that a term of a polynomial should itself be a
polynomial. As many constructions involving polynomial functors involve
distributivity pullbacks (equivalent to local cartesian closedness)
it is not surprising that the definition of terms of polynomials looks
somewhat similar; that is, the definition involves terminal diagrams
of a certain form.
\begin{rem}
There are other examples of this phenomenon; for example it is known
that the composite of two polynomials may be defined as the terminal
diagram of a certain form; see Weber \cite{weber}.
\end{rem}

We first define the order $F$ homogeneous term of a polynomial. For
example given a many variable polynomial, involving say $x$ and $y$,
the order $3$ term may look like $x^{3}+3x^{2}y$.
\begin{defn}
We define the order $F$ \emph{homogeneous term} of a polynomial $\left(s,p,t\right)$
to be the terminal diagram of the form
\[
\xymatrix{ & F\times\overline{B}\myar{\pi_{2}}{r}\ar[d]\ar@/_{1pc}/[dld]\ar@{}[dr]|-{\textnormal{pb}} & \overline{B}\ar[d]\ar@/^{1pc}/[ddr]\\
 & E\ar[r]^{p}\ar[ld]_{s} & B\ar[rd]^{t}\\
I &  &  & J
}
\]
where $\left(s,p,t\right)$ and $F$ are given and $\pi_{2}$ is a
product projection. This means that given any other diagram of this
form (as on the left below)
\begin{equation}
\xymatrix{ & F\times\overline{C}\myar{\psi_{2}}{r}\ar[dd]\ar@/_{1pc}/[dldd]\ar@{}[ddr]|-{\textnormal{pb}} & \overline{C}\ar[dd]\ar@/^{1pc}/[ddrd] &  &  &  & F\times\overline{C}\myar{\psi_{2}}{r}\ar@{..>}[d]\ar@{}[dr]|-{\textnormal{pb}} & \overline{C}\ar@{..>}[d]\\
 &  &  &  & = &  & F\times\overline{B}\myar{\pi_{2}}{r}\ar[d]\ar@/_{1pc}/[dld]\ar@{}[dr]|-{\textnormal{pb}} & \overline{B}\ar[d]\ar@/^{1pc}/[ddr]\\
 & E\ar[r]^{p}\ar[ld]_{s} & B\ar[rd]^{t} &  &  &  & E\ar[r]^{p}\ar[ld]_{s} & B\ar[rd]^{t}\\
I &  &  & J &  & I &  &  & J
}
\label{termpoly}
\end{equation}
we have a unique factorization as on the right above, where the induced
square is a pullback.
\end{defn}

Sometimes we wish to only consider a monomial term of a polynomial
(meaning we have just one term). To do this, we must specify the term
we are talking about. For example, we may specify we are considering
the $x^{2}y$ term of a polynomial. We note this has order $F=\mathbf{3}=\left\{ 1,2,3\right\} $,
variables $I=\left\{ x,y\right\} $, and the fact we have two instances
of $x$ and one of $y$ may be indicated by giving a function $\sigma\colon F\to I$
which sends $1$ and $2$ to $x$ and $3$ to $y$. 

In this way, the monomial term in question is specified by a map $\sigma\colon F\to I$.
The data of a such a map indicates the term's order $F$, and how
that order is decomposed into the input variables. Of course, when
$I=\mathbf{1}$ no such decomposition is necessary, and the homogeneous
terms are monomials.
\begin{defn}
We define the order $\sigma\colon F\to I$ \emph{monomial term} of
a polynomial $\left(s,p,t\right)$ to be the terminal diagram of the
form
\[
\xymatrix{ & F\times\overline{B}\myar{\pi_{2}}{r}\ar[d]\ar@/_{1pc}/[dl]_{\pi_{1}}\ar@{}[dr]|-{\textnormal{pb}} & \overline{B}\ar[d]\ar@/^{1pc}/[ddr]\\
F\ar[d]_{\sigma} & E\ar[r]^{p}\ar[ld]_{s} & B\ar[rd]^{t}\\
I &  &  & J
}
\]
where $\left(s,p,t\right)$ and $\sigma\colon F\to I$ are given and
$\pi_{1}$ and $\pi_{2}$ are product projections. This means that
given any other diagram of this form (as on the left below)
\begin{equation}
\xymatrix{ & F\times\overline{C}\myar{\psi_{2}}{r}\ar[dd]\ar@/_{1pc}/[dld]_{\psi_{1}}\ar@{}[ddr]|-{\textnormal{pb}} & \overline{C}\ar[dd]\ar@/^{1pc}/[ddrd] &  &  &  & F\times\overline{C}\myar{\psi_{2}}{r}\ar@{..>}[d]\ar@{}[dr]|-{\textnormal{pb}} & \overline{C}\ar@{..>}[d]\\
 &  &  &  & = &  & F\times\overline{B}\myar{\pi_{2}}{r}\ar[d]\ar@/_{1pc}/[ld]_{\pi_{1}}\ar@{}[dr]|-{\textnormal{pb}} & \overline{B}\ar[d]\ar@/^{1pc}/[ddr]\\
F\ar[d]_{\sigma} & E\ar[r]^{p}\ar[ld]_{s} & B\ar[rd]^{t} &  &  & F\ar[d]_{\sigma} & E\ar[r]^{p}\ar[ld]_{s} & B\ar[rd]^{t}\\
I &  &  & J &  & I &  &  & J
}
\label{termpoly-1}
\end{equation}
we have a unique factorization as on the right above, where the induced
square is a pullback.
\end{defn}

\subsection{$n$-fibre pullbacks}

In this subsection we introduce a type of pullback which is required
to construct these terms of polynomials. 
\begin{defn}
The $n$-fibre pullback $\left(n\textnormal{fpb}\right)$ about an
arrow $p$, is the terminal pullback diagram of the form

\[
\xymatrix{{\displaystyle n\times A}\ar[rr]^{\pi_{2}}\ar[dd] &  & A\ar[dd]\\
\\
E\ar[rr]_{p} &  & B
}
\]
where $n$ refers to $n$ copies of the terminal object in a locally
cartesian closed $\mathcal{E}$. This means given any other such pullback
\[
\xymatrix{{\displaystyle n\times B}\ar[rr]^{\pi_{2}}\ar[dd] &  & B\ar[dd] &  & {\displaystyle n\times B}\ar[rr]^{\pi_{2}}\ar@{..>}[d] &  & B\ar@{..>}[d]\\
 &  &  & = & {\displaystyle n\times A}\ar[rr]^{\pi_{2}}\ar[d] &  & A\ar[d]\\
E\ar[rr]_{p} &  & B &  & E\ar[rr]_{p} &  & B
}
\]
there exists a unique factorization as above where all squares are
pullbacks.
\end{defn}

\begin{rem}
Whilst infinite order terms may always be defined, and do exist in
some cases, for example the polynomial infinity case $\mathbb{N}\to\mathbf{1}$
when $\mathcal{E}=\mathbf{Set}$. It is unclear if they exist in general,
as it's unclear if the above $n$-fibre pullback can be constructed
with infinite order. One would need infinite products (actually infinite
kernel pairs) and coproducts.
\end{rem}

\begin{rem}
In the authors Master's thesis \cite{mres} the singleton (1) fibre
pullbacks were defined and constructed. Hence this may regarded as
a generalization to the $n$-ary case.
\end{rem}

This following Lemma is the first step towards constructing these
$n$-fibre pullbacks. 
\begin{lem}
Let $\mathcal{E}$ be a locally cartesian closed category with disjoint
finite coproducts. Then the map $n\cdot\delta\colon E^{\boxtimes_{B}n}\times E\to E^{\boxtimes_{B}n}$
is a monomorphism.
\end{lem}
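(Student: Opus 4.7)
The plan is to exploit the distributive and extensive structure of $\mathcal{E}$ to decompose $n\cdot\delta$ into its $n$ constituent components and then verify these assemble into a monomorphism. Since $n\cong\mathbf{1}+\cdots+\mathbf{1}$ and $\mathcal{E}$ is distributive, the source splits canonically as a coproduct of $n$ copies of $E^{\boxtimes_{B}n}$, with the $k$-th summand carried by the $k$-th component of the ``$n$-fold diagonal'' $\delta$. The first step is simply to make this decomposition explicit and to identify each component as the map that inserts the $k$-th coordinate of a distinct $n$-tuple.

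Next, I would check that each individual component $\delta_{k}$ is a monomorphism. In every reasonable reading of the statement, this component is a section of a projection back onto $E^{\boxtimes_{B}n}$ (or of one of the legs of $E^{\boxtimes_{B}n}\times_{B}E$), and hence is a split mono, so this step should be essentially formal.

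The heart of the argument is the third step: showing that, for $k\ne\ell$, the two components $\delta_{k}$ and $\delta_{\ell}$ have disjoint images. Pulling them back against one another yields the subobject of $E^{\boxtimes_{B}n}$ on which the $k$-th and $\ell$-th coordinates coincide. But $E^{\boxtimes_{B}n}$ was defined precisely as the negation $\neg\delta_{p}^{n}$ of the $n$-diagonal via a distributivity pullback, which forces disjointness from every such ``coordinate-collision'' locus. I expect this to be the main obstacle: making it rigorous requires pulling the defining dpb of $E^{\boxtimes_{B}n}$ back along the $(k,\ell)$-collision, and then invoking strictness of $\mathbf{0}$ together with the universal property of the distributivity pullback to conclude the intersection is initial.

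Once the $n$ components are known to be monos with pairwise disjoint images in the common target, the map $n\cdot\delta$ itself is a monomorphism by a direct application of extensivity: using the pullback characterization of extensive categories recalled earlier, a coproduct of monos whose pairwise pullbacks are initial is itself mono, assembling the components into the desired conclusion.
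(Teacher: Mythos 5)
Your overall architecture matches the paper's: split $n\cdot\delta$ into its $n$ components $\delta_{k}$, observe each is a split mono (a section of the projection back to $E^{\boxtimes_{B}n}$), show the pullback of $\delta_{k}$ against $\delta_{\ell}$ is initial for $k\neq\ell$, and conclude by summing the pullbacks using extensivity. Steps 1, 2 and 4 are fine and coincide with what the paper does.

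The gap is in step 3, which you rightly flag as the heart of the argument, but your proposed justification does not go through. The defining distributivity pullback of $E^{\boxtimes_{B}n}=\neg\delta_{p}^{n}$ gives only that $E^{\boxtimes_{B}n}$ is disjoint from the image of the \emph{full} $n$-diagonal $\delta_{p}^{n}\colon E\to E^{\times n}$ (the locus where all coordinates agree). It does not ``force disjointness from every coordinate-collision locus'': the $(k,\ell)$-collision subobject is strictly larger than the full diagonal, and being disjoint from a subobject says nothing about being disjoint from something containing it. (In $\mathbf{Set}$ with $n=3$, a tuple $(a,a,c)$ with $a\neq c$ is not constant, yet lies on the $(1,2)$-collision locus.) What the paper actually does, in the $n=2$ case, is first prove by an explicit composite-pullback computation that the pullback of the two insertion maps $(a,b)\mapsto(a,b,a)$ and $(a,b)\mapsto(a,b,b)$ over $E\times_{B}E\times_{B}E$ is exactly $\delta\colon E\to E\times_{B}E$ on both legs; only after this identification does the disjointness of $E\boxtimes_{B}E$ from $\delta$ --- which is what the dpb genuinely provides --- yield that the restricted pullback is $\mathbf{0}$ (using strictness). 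Your sketch has no analogue of this intermediate identification, and without it the disjointness claim is unsupported; for $n\geq3$ it moreover forces you to confront the fact that disjointness from the partial diagonal $a_{k}=a_{\ell}$ must come from $E^{\boxtimes_{B}n}$ being the object of \emph{pairwise} distinct tuples, not merely from the negation of the single full diagonal.
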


\begin{proof}
We will prove the $n=2$ case, as the general case is similar. We
first note that we have pullbacks
\[
\xymatrix{{\displaystyle E}\ar[rr]^{\delta}\ar[dd]_{\delta} &  & E\times_{B}E\ar[dd]^{E\times\delta}\\
\\
E\times_{B}E\ar[rr]_{\left(a,b\right)\mapsto\left(a,b,a\right)} &  & E\times_{B}E\times_{B}E
}
\]
since if we label the pullback projections $f$ and $g$, we may form
the pullback
\[
\xymatrix{{\displaystyle P}\ar[rr]^{f}\ar[dd]_{g} &  & E\times_{B}E\ar[dd]^{E\times\delta}\ar[rr]^{\mapsto b} &  & E\ar[dd]^{\delta}\\
\\
E\times_{B}E\ar[rr]_{\mapsto\left(a,b,a\right)} &  & E\times_{B}E\times_{B}E\ar[rr]_{\mapsto\left(b,c\right)} &  & E\times_{B}E
}
\]
and as the bottom composite is the identity, $f\colon P\to E\times_{B}E$
is $\delta\colon E\to E\times_{B}E$, and thus $g$ is also since
$E\times\delta$ is mono. Hence we have the pullback exhibiting disjointness
\[
\xymatrix{0\ar[rr]\ar[dd] &  & E\boxtimes_{B}E\ar[dd]^{\neg\delta}\ar@/^{5pc}/[dddd]^{\mapsto\left(a,b,b\right)}\\
\\
{\displaystyle E}\ar[rr]^{\delta}\ar[dd]_{\delta} &  & E\times_{B}E\ar[dd]^{E\times\delta}\\
\\
E\times_{B}E\ar[rr]_{\mapsto\left(a,b,a\right)} &  & E\times_{B}E\times_{B}E
}
\]
Note the above argument may be generalized to $n$-ary diagonal maps.
It follows by summing pullbacks (in the lextensive category $\mathcal{E}$)
that the square
\[
\xymatrix{E\boxtimes_{B}E+E\boxtimes_{B}E\ar@{=}[rr]\ar@{=}[dd] &  & E\boxtimes_{B}E+E\boxtimes_{B}E\ar[dd]^{\left(a,b,a\right)+\left(a,b,b\right)}\\
\\
E\boxtimes_{B}E+E\boxtimes_{B}E\ar[rr]_{\left(a,b,a\right)+\left(a,b,b\right)} &  & E\boxtimes_{B}E\times_{B}E
}
\]
is a pullback square (in the $n=2$ case), and more generally the
squares below are pullbacks

\[
\xymatrix{nE^{\boxtimes_{B}n}\ar@{=}[rr]\ar@{=}[dd] &  & nE^{\boxtimes_{B}n}\ar[dd]^{\sum\delta_{j}}\\
\\
nE^{\boxtimes_{B}n}\ar[rr]_{\sum\delta_{i}} &  & E^{\boxtimes_{B}n}\times E
}
\]
as each pullback of $\delta_{i}\neq\delta_{j}$ is the initial object.
Hence $n\delta=\sum\delta_{i}$ is a monomorphism.
\end{proof}
The reason we needed to show this was a monomorphism was the confirm
the following, which follows from codiagonal maps being stable in
lextensive categories.
\begin{cor}
The pullback diagram below

\[
\xymatrix{X\ar@{=}[rr]\ar[d] &  & X\ar[rr]^{\textnormal{\ensuremath{\lambda}}}\ar[d] &  & _{p_{n}}E\ar[dd]^{p_{n}}\\
n\cdot E^{\boxtimes_{B}n}\ar@{=}[d]\ar@{=}[rr] &  & n\cdot E^{\boxtimes_{B}n}\ar[d]_{n\cdot\delta}\\
n\cdot E^{\boxtimes_{B}n}\ar[rr]^{n\cdot\delta} &  & E^{\boxtimes_{B}n}\times E\ar[rr]^{\pi} &  & E^{\boxtimes_{B}n}
}
\]
exhibits $\lambda$ as a codiagonal, so that $X\cong n\cdot{}_{p_{n}}E$.
\end{cor}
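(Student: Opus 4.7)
My plan is to read the displayed diagram as a pasting of three pullback squares that together exhibit the outer rectangle as the pullback of the codiagonal along $p_{n}$, and then invoke that codiagonals are pullback-stable in a lextensive category. The critical first observation to establish is that the bottom composite $\pi \circ (n \cdot \delta) : n \cdot E^{\boxtimes_{B} n} \to E^{\boxtimes_{B} n}$ coincides with the codiagonal $\nabla$. Indeed, $n \cdot \delta$ is by construction the coproduct morphism $\sum_{i=1}^{n} \delta_{i}$, and each $\delta_{i} : E^{\boxtimes_{B} n} \to E^{\boxtimes_{B} n} \times_{B} E$ is a section of the projection $\pi$; hence precomposing $\pi \circ (n \cdot \delta)$ with the $i$-th coproduct injection yields the identity on $E^{\boxtimes_{B} n}$, which characterizes the codiagonal.

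Next I would verify the three pullback squares in the diagram. The upper-left square (with identities on three of its four sides) is trivially a pullback. The lower-left square has $n \cdot \delta$ on two parallel sides connected by identities on the other two; such a square is a pullback precisely when $n \cdot \delta$ is a monomorphism, which is exactly the content of the preceding Lemma --- this explains the role of that Lemma in the present proof. The remaining rectangle on the right is then the pullback of $p_{n} : {}_{p_{n}}E \to E^{\boxtimes_{B} n}$ along $\pi \circ (n \cdot \delta) = \nabla$, by pasting of pullbacks.

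Finally, I would apply pullback-stability of codiagonals in the lextensive category $\mathcal{E}$: since $n \cdot Y$ is the coproduct $Y + \cdots + Y$ and pullback distributes over coproducts by extensivity, pulling back $\nabla : n \cdot Y \to Y$ along any $f : Z \to Y$ produces the codiagonal $\nabla : n \cdot Z \to Z$. Applied with $Y = E^{\boxtimes_{B} n}$, $Z = {}_{p_{n}}E$, and $f = p_{n}$, this gives $X \cong n \cdot {}_{p_{n}}E$ and identifies $\lambda$ with the codiagonal. There is no substantive obstacle here; the argument is a direct application of extensivity once the identity $\pi \cdot (n \cdot \delta) = \nabla$ has been recognized, with the preceding Lemma serving only to justify that the auxiliary lower-left square is truly a pullback.
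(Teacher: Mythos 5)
Your proposal is correct and follows exactly the route the paper intends: the paper's entire justification is the preceding sentence that the corollary ``follows from codiagonal maps being stable in lextensive categories,'' together with the remark that the lemma on $n\cdot\delta$ being monic is what makes the auxiliary square a pullback. You have simply filled in the details (the identity $\pi\cdot(n\cdot\delta)=\nabla$, the pasting of the three pullbacks, and the extensivity argument for stability of codiagonals), all correctly.
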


\begin{rem}
As our category is lextensive, we sometimes write the product projection
$n\cdot E\to E$ as the codiagonal $\sum_{n}E\to E$.
\end{rem}

The following takes us closer to constructing the $n$-fibre pullbacks.
\begin{lem}
\label{nfaclem} Let $\mathcal{E}$ be a locally cartesian closed
category with disjoint finite coproducts. Let $n$ be fixed. Then
the pullback as on the left below
\[
\xymatrix{n\cdot{}_{p_{n}}E\ar[rr]^{\textnormal{codiag}}\ar[d] &  & _{p_{n}}E\ar[dd]^{p_{n}} &  & n\cdot A\ar[rr]^{\textnormal{codiag}}\ar[dd]_{h'} &  & A\ar[dd]^{h}\\
n\cdot E^{\boxtimes_{B}n}\ar[d]_{n\cdot\delta}\\
E^{\boxtimes_{B}n}\times E\ar[rr]^{\pi} &  & E^{\boxtimes_{B}n} &  & E^{\boxtimes_{B}n}\times E\ar[rr]^{\pi} &  & E^{\boxtimes_{B}n}
}
\]
is terminal among pullbacks as on the right above.
\end{lem}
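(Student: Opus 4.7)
The plan is to establish the universal property by extracting the factoring map from any right pullback data via extensivity together with the local cartesian closed structure. First, I would apply extensivity to decompose $n\cdot A \cong A + A + \cdots + A$, so that $h' \colon n\cdot A \to E^{\boxtimes_{B}n}\times E$ is determined by its restrictions $h'\cdot \iota_i \colon A\to E^{\boxtimes_{B}n}\times E$ to each coproduct summand. The commutativity condition $\pi \cdot h' = h \cdot \textnormal{codiag}$ forces the first coordinate of each such restriction to be $h$, so that $h' \cdot \iota_i = \langle h, \beta_i\rangle$ for uniquely determined $\beta_i \colon A\to E$. The right square being a pullback is then equivalent to asserting that the induced comparison $\langle \textnormal{codiag}, \beta\rangle \colon n\cdot A \to A\times E$ (with $\beta\colon n\cdot A\to E$ assembled from the $\beta_i$) is an isomorphism in $\mathcal{E}/A$.

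Applying the same analysis to the left pullback identifies its analogous $\beta_i$ with the canonical sections $\pi_i \cdot p_n \colon {}_{p_{n}}E \to E$, where $\pi_i$ denotes the $i$-th projection $E^{\boxtimes_{B}n} \hookrightarrow E^{\times_{B}n} \to E$. Thus the left pullback exhibits ${}_{p_{n}}E$ as a prototypical object equipped with a map to $E^{\boxtimes_{B}n}$ together with this iso structure. To construct the factorization $\phi \colon A \to {}_{p_{n}}E$ with $p_n \phi = h$, I will invoke the universal property of ${}_{p_{n}}E$ coming from the previous corollary, producing $\phi$ from the data $(h, \beta_1, \ldots, \beta_n)$ on $A$. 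Uniqueness of $\phi$ is then immediate: any other $\phi'$ with $p_n \phi' = h$ would, by the canonicity of the extensive decomposition, induce the same sections and hence coincide with $\phi$.

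The main obstacle lies in producing $\phi$ explicitly --- that is, in rigorously exhibiting the universal property of ${}_{p_{n}}E$ that this construction depends on. This will likely require tracing through the construction of ${}_{p_{n}}E$ using the LCC adjunctions in the slice $\mathcal{E}/E^{\boxtimes_{B}n}$, for instance obtaining $\phi$ as an adjoint transpose through a suitable $\Pi_{\pi}$, or constructing it directly from the distributivity pullback implicit in the definition of ${}_{p_{n}}E$. Once $\phi$ is in hand, verifying that the induced square is itself a pullback reduces to a diagram chase using the canonical nature of the extensive decomposition on both sides and the fact that both the left and right original squares are pullbacks over $\pi$.
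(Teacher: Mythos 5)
Your scaffolding matches the shape of the paper's argument---extensivity to split $n\cdot A$ into summands, and the universal property of the distributivity pullback defining $p_{n}=\Pi_{\pi}\left(n\cdot\delta\right)$ to produce the factorization---but you defer exactly the step that carries the mathematical content of the lemma, and that step is not routine. The universal property you want to invoke is terminality among pullbacks \emph{around} $n\cdot\delta$ \emph{and} $\pi$; the hypothesis only hands you a pullback of $\pi$ along $h$ whose left leg $h'$ lands in $E^{\boxtimes_{B}n}\times E$ with no a priori relation to $n\cdot\delta$. In your notation: each restriction $h'\cdot\iota_{i}=\langle h,\beta_{i}\rangle$ does have first coordinate $h$, but $\beta_{i}\colon A\to E$ is just some map into the relevant fibre, and nothing you have written forces it to be (even locally on a coproduct decomposition of $A$) one of the $n$ projections $E^{\boxtimes_{B}n}\to E$ composed with $h$. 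That is precisely what has to be proved---in $\mathbf{Set}$ it is the observation that the distinct $n$-tuple $h\left(a\right)$ exhausts a fibre which the pullback condition says has exactly $n$ elements---and it is where the hypotheses that $h$ factors through $E^{\boxtimes_{B}n}$ rather than $E^{\times n}$, and that the top edge of the right square is the codiagonal, are actually used. Your appeal to ``the universal property of ${}_{p_{n}}E$ coming from the previous corollary'' cannot substitute for this: the corollary only computes the pullback of $p_{n}$ along a codiagonal, and an adjoint transpose through $\Pi_{\pi}$ applied to raw $h'$ does not land in ${}_{p_{n}}E$ until $h'$ is known to factor through $n\cdot\delta$.

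The paper closes this gap by pulling the monomorphism $n\cdot\delta$ back along $h'$, identifying the resulting object with $n\cdot A$ via pullback-stability of codiagonals, and then showing that the comparison monomorphism $x\colon n\cdot A\to n\cdot A$ is the identity by restricting to coproduct summands with extensivity and computing $\textnormal{id}=\textnormal{codiag}\cdot i_{m}=\textnormal{codiag}\cdot x\cdot i_{m}=\textnormal{codiag}\cdot i_{m}\cdot x_{m}=x_{m}$. Only after this does the right-hand diagram become a pullback around $n\cdot\delta$ and $\pi$, at which point the distributivity-pullback universal property yields your $\phi$ together with its uniqueness. Note also that uniqueness is not ``immediate'' from $p_{n}\phi'=h$ as you assert, since $p_{n}$ is not claimed to be monic; it too comes from the universal property applied to the pair of squares. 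To repair the proposal, supply the factorization of $h'$ through $n\cdot\delta$, or equivalently prove that the $\beta_{i}$ are, over a suitable coproduct decomposition of $A$, the components of $h$.
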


\begin{proof}
Suppose we are given a pullback as on the right above. We now check
that $h'$ factors through $n\cdot\delta$ (we assume without loss
of generality that only representative coproducts are used) so we
must check the pullback $x$ below is invertible. Note also that $x$
below is mono.
\[
\xymatrix{n\cdot A\ar[rr]^{x}\ar[dd] &  & n\cdot A\ar[rr]^{\textnormal{codiag}}\ar[dd]_{h'} &  & A\ar[dd]^{h}\\
\\
n\cdot E^{\boxtimes_{B}n}\ar[rr]_{n\cdot\delta} &  & E^{\boxtimes_{B}n}\cdot E\ar[rr]_{\pi} &  & E^{\boxtimes_{B}n}
}
\]
Note we used above that codiagonals are pullback stable (so $\textnormal{codiag}\cdot x=\textnormal{codiag})$.
Now, to see $x$ is invertible, it suffices to check each component
of $x$, which we call $x_{m}$ (and is also a mono), is invertible
as below
\[
\xymatrix{P\ar[rr]\ar[dd]^{\mathbf{inv}} &  & A'_{m}\ar[dd]^{j_{m}}\ar[rr]^{x_{m}} &  & A\ar[dd]^{i_{m}} & P\ar[rr]\ar[dd]^{\mathbf{inv}} &  & A'_{m}\ar[dd]^{j_{m}}\ar[rr]^{x_{m}} &  & A\ar[dd]^{i_{m}}\\
\\
A\ar[rr]_{i_{m}} &  & n\cdot A\ar[rr]_{x} &  & n\cdot A & A\ar[rr]_{i_{m}} &  & n\cdot A\ar[rr]_{x} &  & n\cdot A\\
\\
0\ar[uu]\ar@{=}[rr] &  & 0\ar[uu]\ar@{=}[rr] &  & 0\ar[uu] & 0\ar[rr]\ar[uu] &  & \sum_{k\neq m}A'_{k}\ar[uu]\ar[rr] &  & \left(n-1\right)A\ar[uu]
}
\]
where $A'_{m}$ is the indicated pullback (and the indicated arrow
is invertible since coproduct injections are pullback stable using
the right above). We first see that $A'_{m}=A$ (taking representatives
to avoid isomorphisms), since the indicated arrow above is invertible,
as well as that below
\[
\xymatrix{P\ar[dd]^{\mathbf{inv}}\ar[rr] &  & A\ar[dd]^{i_{m}}\\
\\
A'\ar[rr]^{j_{m}} &  & n\cdot A\\
\\
0\ar[uu]\ar@{=}[rr] &  & 0\ar[uu]
}
\]
also showing $i_{m}=j_{m}$ (as they are both coproduct injections
from the same object). Also noting codiagonals are pullback stable
we get
\[
\textnormal{id}=\textnormal{codiag}\cdot i_{m}=\textnormal{codiag}\cdot x\cdot i_{m}=\textnormal{codiag}\cdot i_{m}\cdot x_{m}=x_{m}
\]
as required, thus showing $x\colon nA\to nA$ is the identity.
\end{proof}
We now have an explicit description of the $n$-fibres pullback.
\begin{prop}
Let $\mathcal{E}$ be a locally cartesian closed category with disjoint
finite coproducts. The $n$-fibre pullback is then given by
\[
\xymatrix{n\cdot{}_{p_{n}}E\ar[rr]^{\textnormal{codiag}}\ar[d] &  & _{p_{n}}E\ar[dd]^{p_{n}}\\
n\cdot E^{\boxtimes_{B}n}\ar[d]_{n\cdot\delta}\\
E^{\boxtimes_{B}n}\times E\ar[rr]^{\pi}\ar[d] &  & E^{\boxtimes_{B}n}\ar[d]^{p}\\
E\ar[rr]_{p} &  & B
}
\]
\end{prop}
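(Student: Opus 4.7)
The plan is to verify that the displayed diagram is of the required form --- a pullback of $p$ whose top edge is the projection $n \times X \to X$ (equivalently, a codiagonal) --- and that it enjoys the universal property defining the $n$-fibre pullback. The form is almost automatic: the lower square is the defining pullback of $p_n$ along the structural map $E^{\boxtimes_B n} \to B$, while the upper square is a pullback in which the top arrow is exhibited as a codiagonal by the corollary preceding Lemma \ref{nfaclem}. Vertically pasting these two pullback squares yields the desired outer pullback rectangle around $p$ with codiagonal on top.

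For the universal property, suppose we are given a competing pullback
\[
\xymatrix{n \cdot A \ar[r]^-{\textnormal{codiag}} \ar[d]_{\beta} & A \ar[d]^{h} \\ E \ar[r]_{p} & B}
\]
Precomposing $\beta$ with the coproduct inclusions $\iota_k : A \to n \cdot A$ yields $n$ maps $\sigma_k := \beta \circ \iota_k : A \to E$, all satisfying $p \circ \sigma_k = h$, which assemble into a single map $(\sigma_1, \dots, \sigma_n) : A \to E^{\times n}$ over $B$. The crucial intermediate step is to show this lifts further through $E^{\boxtimes_B n} \hookrightarrow E^{\times n}$, producing a map $\widehat{h} : A \to E^{\boxtimes_B n}$ whose image under the structural map to $B$ is $h$. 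Granted $\widehat{h}$, pulling back $p_n$ along $\widehat{h}$ reduces the data of $(\beta, h)$ to a pullback over $\pi : E^{\boxtimes_B n} \times E \to E^{\boxtimes_B n}$, to which Lemma \ref{nfaclem} applies directly to produce the unique factorization through $n \cdot \delta$, and hence through the entire displayed diagram.

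The main obstacle is the production of the lift $\widehat{h}$, i.e., showing that $(\sigma_1, \dots, \sigma_n)$ is disjoint from the $n$-diagonal $\delta^n$. This should be established by pulling back the given outer square along the pairwise disjoint coproduct inclusions $\iota_i, \iota_j : A \to n \cdot A$ for $i \neq j$: in the extensive setting of $\mathcal{E}$, the disjointness of the injections forces the pullback of $(\sigma_i, \sigma_j)$ against the diagonal $\delta : E \to E \times_B E$ to be initial, which is exactly the negation condition defining $E^{\boxtimes_B n}$. With the lift in hand, uniqueness of the overall factorization is inherited from the uniqueness clauses of Lemma \ref{nfaclem} and of pullback.
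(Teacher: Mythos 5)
Your proposal is correct and follows essentially the same route as the paper: restrict the competing pullback along the coproduct injections to get maps $\sigma_k\colon A\to E$, use disjointness of the injections (extensivity) to show the tuple $(\sigma_1,\dots,\sigma_n)$ is disjoint from the diagonal and hence lifts through $E^{\boxtimes_B n}$, then invoke Lemma \ref{nfaclem}. If anything, your justification of the disjointness step (pulling back along $\iota_i,\iota_j$ for $i\neq j$) is more explicit than the paper's brief parenthetical.
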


\begin{proof}
Suppose we are given a pullback as below
\[
\xymatrix{A\ar[rr]^{i_{m}} &  & {\displaystyle n\times A}\ar[rr]^{\pi_{2}}\ar[dd]_{g} &  & A\ar[dd]^{f}\\
\\
 &  & E\ar[rr]_{p} &  & B
}
\]
and pick an $1\leq m\leq n$, noting that $f=p\cdot g\cdot i_{m}$.
Write $g_{m}=g\cdot i_{m}$. Each $g_{m}$ is disjoint (a consequence
of $n\times A$ being a \emph{disjoint} sum of $n$ copies of $A$,
and the dense-closed factorization respecting sums). We may then factor
$f$ through $E^{\boxtimes_{B}n}$ as below. Thus we write this pullback
as
\[
\xymatrix{{\displaystyle n\times A}\ar[rr]^{\pi_{2}}\ar[dd]_{g_{1}+...+g_{n}} &  & A\ar[d]^{\left(g_{1},...,g_{n}\right)}\\
 &  & E^{\boxtimes_{B}n}\ar[d]^{p}\\
E\ar[rr]_{p} &  & B
}
\]
and we may then apply Lemma \ref{nfaclem} so recover the desired
factorization.
\end{proof}

\subsection{The homogeneous coreflections between polynomials}

Using these $n$-fibre pullbacks we may exhibit an infinite family
of adjunctions between bicategories of polynomials and those of order
$n$. These adjunctions correspond to taking the homogeneous terms.
Recall that the 2-cells here are cartesian maps of polynomials.
\begin{thm}
Let $\mathcal{E}$ be a locally cartesian closed category with disjoint
finite sums. Let $I,J\in\mathcal{E}$. Then the family of functors
\[
\mathbf{homterm}_{n}\colon\mathbf{Poly}\left(\mathcal{E}\right)\left(I,J\right)\to\mathbf{Poly}_{n}\left(\mathcal{E}\right)\left(I,J\right)
\]
 defined by
\[
\xymatrix{ &  &  &  &  &  & n\cdot{}_{p_{n}}E\ar[r]^{\textnormal{codiag}}\ar[d]\ar@{}[rd]|-{\textnormal{nfpb}} & _{p_{n}}E\ar[d]^{p_{n}}\\
 & E\ar[ld]_{s}\ar[r]^{p} & B\ar[rd]^{t} &  & \mapsto &  & E\ar[ld]_{s}\ar[r]_{p} & B\ar[rd]^{t}\\
I &  &  & J &  & I &  &  & J
}
\]
are coreflection right adjoints to the inclusion of order $n$ homogeneous
polynomials into general polynomials written $\mathbf{inc}_{n}\colon\mathbf{Poly}_{n}\left(\mathcal{E}\right)\left(I,J\right)\to\mathbf{Poly}\left(\mathcal{E}\right)\left(I,J\right)$
.
\end{thm}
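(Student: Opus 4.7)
The plan is to derive the adjunction directly from the terminal property of the $n$-fibre pullback established in the preceding subsection; the coreflection property will then follow formally from the fact that $\mathbf{inc}_n$ is a fully faithful inclusion.

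First, I would unpack the data of a cartesian 2-cell in $\mathbf{Poly}(\mathcal{E})(I,J)$ from an order $n$ homogeneous polynomial $P' = (s', \pi\colon n\cdot C \to C, t')$ (whose middle arrow is a codiagonal/product projection) to a general polynomial $P = (s,p,t)$. By definition this amounts to a pullback square
\[
\xymatrix{n\cdot C \ar[r] \ar[d]_{\pi} & E \ar[d]^{p} \\ C \ar[r] & B}
\]
together with compatibility of the composites $n\cdot C \to E \to I$ and $C \to B \to J$ with the given legs $s'$ and $t'$.

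Next, I would apply the universal property of the $n$-fibre pullback about $p$ proved in the previous proposition. This yields a unique factorization of the above square as a pasting of pullbacks through the $n$-fibre diagram
\[
\xymatrix{n\cdot C \ar[r] \ar[d] & n\cdot {}_{p_n}E \ar[r] \ar[d] & E \ar[d]^{p} \\ C \ar[r] & {}_{p_n}E \ar[r] & B}
\]
The left-hand pullback square then assembles into a cartesian map $P' \to \mathbf{homterm}_n(P)$ in $\mathbf{Poly}_n(\mathcal{E})(I,J)$, and leg compatibility transports from the given data since postcomposing the new map with ${}_{p_n}E \to E \xrightarrow{s} I$ and ${}_{p_n}E \to B \xrightarrow{t} J$ reproduces the original legs by pasting. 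Conversely, postcomposing any cartesian map $P' \to \mathbf{homterm}_n(P)$ in $\mathbf{Poly}_n(\mathcal{E})(I,J)$ with the canonical cartesian map $\mathbf{homterm}_n(P) \to P$ (the outer rectangle of the $n$-fibre pullback) yields a cartesian map in $\mathbf{Poly}(\mathcal{E})(I,J)$ to $P$. Uniqueness in the terminal property of the $n$-fibre pullback makes these assignments mutually inverse, producing the natural hom-set bijection
\[
\mathbf{Poly}(\mathcal{E})(I,J)(\mathbf{inc}_n P', P) \cong \mathbf{Poly}_n(\mathcal{E})(I,J)(P', \mathbf{homterm}_n P),
\]
witnessing $\mathbf{inc}_n \dashv \mathbf{homterm}_n$. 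Functoriality of $\mathbf{homterm}_n$ and naturality in $P$ are both formal consequences of the same uniqueness-of-factorization argument.

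Finally, since $\mathbf{inc}_n$ is tautologically fully faithful (cartesian maps between order $n$ polynomials agree whether computed in $\mathbf{Poly}_n$ or in $\mathbf{Poly}$), the adjunction is automatically a coreflection; equivalently, the unit $\mathrm{id} \to \mathbf{homterm}_n \circ \mathbf{inc}_n$ is invertible, which concretely says the $n$-fibre pullback of a codiagonal is that codiagonal. The main obstacle, such as it is, lies in carefully tracking the leg compatibilities on both the $I$- and $J$-sides when translating between the two hom-sets; this is routine since the outer rectangle of the $n$-fibre pullback commutes by construction with both legs, so no ideas beyond the universal property of the $n$-fibre pullback are required.
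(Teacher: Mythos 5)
Your proof is correct and takes essentially the same approach as the paper: the counit is the $n$-fibre pullback itself (its terminal property giving the universal arrow), and the unit is invertible because the $n$-fibre pullback of a codiagonal is trivially that codiagonal. Your universal-arrow packaging of the adjunction has the minor advantage of subsuming the triangle identities, which the paper states as unit/counit data and leaves as an exercise.
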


\begin{proof}
We first note that the composite $\mathbf{homterm}_{n}\cdot\mathbf{inc}_{n}$
on $\mathbf{Poly}\left(\mathcal{E}\right)\left(I,J\right)$ is isomorphic
to the identity, as clearly the $n$-fibre pullback about $nB\to B$
is the square
\[
\xymatrix{nB\ar[d]_{\textnormal{id}}\ar[r] & B\ar[d]^{\textnormal{id}}\\
nB\ar[r] & B
}
\]
as this is trivially terminal. We specify the counit $\epsilon\colon\mathbf{inc}_{n}\cdot\mathbf{homterm}_{n}\to\textnormal{id}$
at a general polynomial $\left(s,p,t\right)$ to be the $n$-fibre
pullback
\[
\xymatrix{ & n\cdot{}_{p_{n}}E\ar[r]^{\textnormal{codiag}}\ar[dd]_{n\cdot p_{n}}\ar@{}[rdd]|-{\textnormal{nfpb}}\ar[ld] & _{p_{n}}E\ar[dd]^{p_{n}}\ar[rd]\\
I &  &  & J\\
 & E\ar[lu]_{s}\ar[r]_{p} & B\ar[ru]^{t}
}
\]
The triangle identities are left as an exercise for the reader.
\end{proof}
\begin{rem}
By doctrinal adjunction \cite{doctrinal} oplax structures on the
left adjoint biject with lax structures on the right adjoint. Provided
that there is a natural bicategory structure on $\mathbf{Poly}_{n}\left(\mathcal{E}\right)\left(I,J\right)$
such that the inclusion is oplax, it follows that each homogeneous
term functor
\[
\mathbf{homterm}_{n}\colon\mathbf{Poly}\left(\mathcal{E}\right)\left(I,J\right)\to\mathbf{Poly}_{n}\left(\mathcal{E}\right)\left(I,J\right)
\]
is a lax functor. This is the case when $n$ is zero or one. In the
case $n=1$, we recover the coreflection adjunction between $\mathbf{Poly}\left(\mathcal{E}\right)$
and $\mathbf{Span}\left(\mathcal{E}\right)$ first shown in the authors
Master's thesis \cite{mres}.
\end{rem}

\section{The $\left(I,J\right)=\left(1,1\right)$ derivatives of polynomials}

In this section we give a categorical version of McBride's definition
of derivatives of polynomials \cite{McBride}, and then explain why
a modified version of this definition involving the dense-mono localization,
allows all derivatives to exist.

\subsection{The $\left(I,J\right)=\left(1,1\right)$ products of polynomials}

Before one can define derivatives of polynomials, we we need the basic
description of products. There are many examples of products \cite{spivakpoly},
but we will only require the most basic of them.
\begin{defn}
Suppose we are given two polynomials $p$ and $p'$ as below (with
their associated polynomial functors) 
\[
\xymatrix{\mathbf{1} & E\ar[r]^{p}\ar[l] & B\ar[r] & \mathbf{1} & \mapsto & \left(\sum_{b\in B}X^{E_{b}}\right)\\
\mathbf{1} & E'\ar[r]^{p'}\ar[l] & B'\ar[r] & \mathbf{1} & \mapsto & \left(\sum_{b\in B'}X^{E'_{b}}\right)
}
\]
then the product of the two polynomials written $p\ast p'$ is

\[
\xymatrix{\mathbf{1} & EB'+E'B\ar[l]\myar{p\ast p'}{r} & BB'\ar[r] & \mathbf{1} & \mapsto & \left(\sum_{\left(b,b'\right)\in BB'}X^{E_{b_{1}}+E_{b_{2}}}\right)}
\]
\end{defn}

We will only need the following basic example of multiplication by
the linear polynomial $\mathbf{1}$.
\begin{example}
The product of 

\[
\xymatrix{\mathbf{1} & E\ar[r]^{p}\ar[l] & B\ar[r] & \mathbf{1} & \mapsto & \left(\sum_{b\in B}X^{E_{b}}\right)\\
\mathbf{1} & \mathbf{1}\ar[r]^{\textnormal{id}}\ar[l] & \mathbf{1}\ar[r] & \mathbf{1} & \mapsto & \left(X\right)
}
\]
is simply
\[
\xymatrix{\mathbf{1} & E+B\ar[l]\myar{p\ast\mathbf{1}}{r} & B\ar[r] & \mathbf{1} & \mapsto & \left(\sum_{b\in B}X^{E_{b}+1}\right)}
\]
\end{example}

McBride's definition of derivative (written down categorically) is
then as the linear exponential.
\begin{defn}
Let $\mathcal{E}$ be a locally cartesian closed category with disjoint
finite sums. The derivative functor $D\colon\mathbf{Poly}\mathcal{E}\left(1,1\right)\to\mathbf{Poly}\mathcal{E}\left(1,1\right)$
is then defined as the right adjoint to linear multiplication, that
is $\left(-\right)\ast\mathbf{1}\dashv D\left(-\right)$.
\end{defn}

We now explain categorically why (when a decidability condition is
satisfied) this derivative exists. It is worth noting that the candidate
derivative always exists, regardless of decidability. This formula
is also given in the case $\mathcal{E}=\mathbf{Set}$ in the notes
of Kock \cite{kocknotes}.
\begin{prop}
Suppose we are given a polynomial 
\[
\xymatrix{\mathbf{1} & E\ar[r]^{p}\ar[l] & B\ar[r] & \mathbf{1}}
\]
such that the diagonal map $\delta_{p}\colon E\to E\times_{B}E$ is
decidable as in Definition \ref{decidable}. Then the derivative $Dp$
exists, and is given by the polynomial
\[
\xymatrix@R=0.5em{ & E\boxtimes_{B}E\ar[r]^{\neg\delta_{p}}\ar[ld] & E\times_{B}E\myar{\pi_{1}}{r} & E\ar[dr]\\
\mathbf{1} &  &  &  & \mathbf{1}
}
\]
\end{prop}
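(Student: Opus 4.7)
The plan is to verify the adjunction $(-)*\mathbf{1} \dashv D$ at the polynomial $p$ by exhibiting, for an arbitrary polynomial $q=(\mathbf{1} \leftarrow G \to C \to \mathbf{1})$, a natural bijection between cartesian maps $q*\mathbf{1} \to p$ and cartesian maps $q \to D(p)$, where $D(p)$ denotes the polynomial claimed in the statement.

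First I would unpack both sides of the bijection. Recalling that $q*\mathbf{1}$ is the polynomial $G+C \to C$ (with the left summand mapping via $q$ and the right via the identity), a cartesian map $q*\mathbf{1} \to p$ is a pullback square from $G+C \to C$ to $E \to B$. By extensivity, such a pullback amounts to a map $\varphi\colon C \to B$ together with a decomposition $\varphi^*E \cong G+C$ in $\mathcal{E}/C$ in which the second summand is the identity -- equivalently, the image of a section of the projection $\pi_1\colon C\times_B E \to C$; such a section corresponds via the universal property to a map $\psi\colon C \to E$ with $p\psi = \varphi$, and I will write $\hat\psi\colon C \to C\times_B E$ for the induced section. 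On the other side, a cartesian map $q \to D(p)$ unfolds to just a map $\psi\colon C \to E$ together with an isomorphism $G \cong \psi^*(E\boxtimes_B E)$.

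The core step is to identify these two data sets. I would first observe that $\hat\psi$ is the pullback of the diagonal $\delta_p\colon E \to E \times_B E$ along $\tilde\psi := \psi \times_B 1_E\colon C\times_B E \to E\times_B E$. By Beck--Chevalley (pullback stability of the negation operator $\neg = \Pi_{(-)}\mathbf{0}$ in a locally cartesian closed category with strict initial), the negation $\neg\hat\psi$ is then the pullback of $\neg\delta_p$ along $\tilde\psi$, and a direct identification shows this pullback, viewed as an object over $C$ via $\pi_1$, is precisely $\psi^*(E\boxtimes_B E)$. Hence pulling back the decidability isomorphism $E + (E\boxtimes_B E) \cong E\times_B E$ along $\tilde\psi$ yields $C + \psi^*(E\boxtimes_B E) \cong C\times_B E$, forcing $G \cong \psi^*(E\boxtimes_B E)$ under the left-hand data; conversely, from the right-hand data one recovers the left-hand decomposition via the same canonical isomorphism.

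The main obstacle I anticipate is carefully identifying the summands of the decidability decomposition with $\hat\psi$ on one side and $\psi^*(E\boxtimes_B E)$ on the other; this uses both pullback stability of negation and the identification of objects over $C \times_B E$ with objects over $C$ via the projection $\pi_1$. Once this is in hand, naturality of the bijection in $q$ follows from functoriality of pullback along cartesian maps, completing the proof of the adjunction $(-)*\mathbf{1} \dashv D$ at $p$ and thus identifying $D(p)$ with the claimed polynomial.
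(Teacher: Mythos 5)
Your proposal is correct, but it takes a genuinely different route from the paper. The paper establishes the adjunction $(-)\ast\mathbf{1}\dashv D$ at $p$ by writing down the unit and counit explicitly: the counit uses the decidability isomorphism $E\boxtimes_{B}E+E\cong E\times_{B}E$ to identify $Dp\ast\mathbf{1}$ with the kernel-pair projection $\pi_{1}\colon E\times_{B}E\to E$, while the unit $\eta_{p}\colon p\to D\left(p\ast\mathbf{1}\right)$ requires a computation of $\left(E+B\right)\boxtimes_{B}\left(E+B\right)$ as a sum of distributivity pullbacks; the triangle identities are then left as an exercise. You instead verify representability of $q\mapsto\mathrm{Hom}\left(q\ast\mathbf{1},p\right)$ directly via a natural bijection, which sidesteps both the unit computation and the triangle identities. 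Your key steps all check out: a cartesian map $q\ast\mathbf{1}\to p$ is, by extensivity, a map $\varphi\colon C\to B$ together with a complemented section $\hat{\psi}$ of $\pi_{1}\colon C\times_{B}E\to C$ with complement $G$; the square exhibiting $\hat{\psi}$ as the pullback of $\delta_{p}$ along $\tilde{\psi}$ is indeed a pullback; Beck--Chevalley for $\Pi$ together with strictness of $\mathbf{0}$ gives $\neg\hat{\psi}\cong\tilde{\psi}^{*}\left(\neg\delta_{p}\right)$, and the pullback square relating $\tilde{\psi}$ and $\psi$ over the two copies of $\pi_{1}$ identifies this with $\psi^{*}\left(E\boxtimes_{B}E\right)$ over $C$; and pulling back the decidability isomorphism along $\tilde{\psi}$ (legitimate since $\Delta_{\tilde{\psi}}$ preserves sums, being a left adjoint) shows that $\hat{\psi}$ is complemented with complement $\psi^{*}\left(E\boxtimes_{B}E\right)$, so that $G\cong\psi^{*}\left(E\boxtimes_{B}E\right)$ by uniqueness of complements in an extensive category (disjointness of the two summands forces any two complements of the same subobject to coincide). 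The decidability hypothesis is used exactly where it must be, namely to make the passage from a bare section $\psi$ back to a coproduct decomposition of $C\times_{B}E$. What your approach buys is a self-contained universal-property argument with no deferred verifications; what the paper's buys is the explicit formulas for $\eta$ and $\epsilon$, which are reused in the surrounding discussion. As a sanity check, the image of $\mathrm{id}_{Dp}$ under your bijection is precisely the paper's counit square.
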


\begin{proof}
To show this is the derivative, we must specify the unit $\eta_{p}\colon p\to D\left(p\times\mathbf{1}\right)$
and counit $\epsilon_{p}\colon Dp\times\mathbf{1}\to p$ components
of the adjunction$.$ To describe the counit $\epsilon$ at a component
$p$, we first note that $Dp\ast\mathbf{1}$
\[
\xymatrix{E\boxtimes_{B}E+E\ar[rr]^{Dp+\textnormal{id}} &  & E}
\]
is isomorphic to (by decidability of $p$)
\[
\xymatrix{E\times_{B}E\ar[rr]^{\pi_{1}} &  & E}
\]
so that the counit is simply the kernel pair
\[
\xymatrix{E\times_{B}E\ar[rr]^{p}\ar[d]_{p} &  & E\ar[d]^{p}\\
E\ar[rr]_{p} &  & B
}
\]
Now the unit is given by $\eta_{p}\colon p\to D\left(p\ast\mathbf{1}\right)$
\[
\xymatrix{E\ar@/^{1pc}/[rrrr]^{p}\ar@{..>}[rr]\ar@{..>}[dd] &  & B\times_{B}\left(E+B\right)\ar[rr]_{\pi_{1}}\ar[dd]^{\iota_{B\times_{B}\left(E+B\right)}} &  & B\ar[dd]^{\iota_{B}}\\
\\
\left(E+B\right)\boxtimes_{B}\left(E+B\right)\ar[rr]_{\neg\textnormal{diag}} &  & \left(E+B\right)\times_{B}\left(E+B\right)\myard{\pi_{1}}{rr} &  & E+B
}
\]
where the dotted arrows are constructed by the following argument.
We use the pseudocommutativity of the square
\[
\xymatrix{\mathcal{E}/\left(E_{1}+E_{2}\right)\ar[rr]^{\Pi_{p_{1}+p_{2}}}\ar[d]_{\simeq} &  & \mathcal{E}/\left(B_{1}+B_{2}\right)\\
\mathcal{E}/E_{1}\times\mathcal{E}/E_{2}\ar[rr]_{\Pi_{p_{1}}\times\Pi_{p_{2}}} &  & \mathcal{E}/B_{1}\times\mathcal{E}/B_{2}\ar[u]_{\simeq}
}
\]
to justify taking the sum of the distributivity pullbacks
\[
\xymatrix{\mathbf{0}\ar[rr]^{!}\ar@{=}[d] &  & E\boxtimes_{B}E\ar[dd]^{\neg\textnormal{diag}} & \mathbf{0}\ar[rr]^{!}\ar@{=}[d] &  & \overbrace{B\boxtimes_{B}B}^{0}\ar[dd]^{\neg\textnormal{diag}}\\
\mathbf{0}\ar[d]_{!}\ar@{}[rr]|-{\textnormal{dpb}} &  & \; & \mathbf{0}\ar[d]_{!}\ar@{}[rr]|-{\textnormal{dpb}} &  & \;\\
E\ar[rr]_{\textnormal{diag}} &  & E\times_{B}E & B\ar[rr]_{\textnormal{diag}} &  & B\times_{B}B
}
\]
and
\[
\xymatrix{\mathbf{0}\ar[rr]^{!}\ar@{=}[d] &  & \overbrace{E\times_{B}B}^{E}\ar@{=}[dd] & \mathbf{0}\ar[rr]^{!}\ar@{=}[d] &  & \overbrace{B\times_{B}E}^{E}\ar@{=}[dd]\\
\mathbf{0}\ar[d]_{!}\ar@{}[rr]|-{\textnormal{dpb}} &  & \; & \mathbf{0}\ar[d]_{!}\ar@{}[rr]|-{\textnormal{dpb}} &  & \;\\
0\ar[rr]_{!} &  & E\times_{B}B & 0\ar[rr]_{!} &  & B\times_{B}E
}
\]
giving since $B\times_{B}E$ is the only non-disjoint component common
of the sum $\left(E+B\right)\times_{B}\left(E+B\right)$ the diagram
\[
\xymatrix{\mathbf{0}\ar[rr]^{!}\ar@{=}[d] &  & \overbrace{\left(E+B\right)\boxtimes_{B}\left(E+B\right)}^{E\boxtimes_{B}E+E\times_{B}B+B\times_{B}E}\ar[dd]^{\neg\textnormal{diag}} &  & \overbrace{B\times_{B}E}^{E}\ar@{..>}[ll]\ar@{..>}[dd]\\
\mathbf{0}\ar[d]_{!}\ar@{}[rr]|-{\sum\textnormal{dpb}} &  & \;\\
E+B\myar{\textnormal{diag}}{rr} &  & \left(E+B\right)\times_{B}\left(E+B\right) &  & \underbrace{B\times_{B}\left(E+B\right)}_{B\times_{B}E+B\times_{B}B}\myar{\iota_{B\times_{B}\left(E+B\right)}}{ll}
}
\]
The triangle identities are left as an exercise.
\end{proof}
\begin{rem}
We may work out the polynomial functor for the derivative via the
calculation
\[
\begin{aligned}\mathbf{Ext}\left(D\left(p\right)\right) & =\sum_{e\in E}X^{E_{pe}-1}\\
 & =\sum_{b\in E}\sum_{e\in E_{b}}X^{E_{b}-1}\\
 & =\sum_{b\in E}E_{b}X^{E_{b}-1}\\
 & =D\left(\mathbf{Ext}\left(p\right)\right)
\end{aligned}
\]
where $\text{\ensuremath{\mathbf{Ext}}}$ is the extension to polynomial
functors, and $D$ is the derivative.
\end{rem}

Now using the localization from earlier to account decidability we
get the following main result.
\begin{thm}
Let $\mathcal{E}$ be a locally cartesian closed category with disjoint
finite sums. Let $\mathcal{E}\left[\mathcal{W}^{-1}\right]$ denote
the dense mono localization of $\mathcal{E}$. Then the pseudofunctor
\[
\mathbf{Poly}\mathcal{E}\to\mathbf{Poly}\mathcal{E}\left[\mathcal{W}^{-1}\right]
\]
preserves $\left(I,J\right)=\left(1,1\right)$ derivatives, and all
such derivatives exist in $\mathbf{Poly}\mathcal{E}\left[\mathcal{W}^{-1}\right]$.
\end{thm}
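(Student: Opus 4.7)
The plan is to combine the explicit formula for the derivative from the preceding proposition with the fact that every decidability map is a dense monomorphism, hence inverted by the localization. For any $p\colon E\to B$ the candidate formula
\[
\xymatrix@R=0.5em{ & E\boxtimes_{B}E\ar[r]^{\neg\delta_{p}}\ar[ld] & E\times_{B}E\myar{\pi_{1}}{r} & E\ar[dr]\\
\mathbf{1} &  &  &  & \mathbf{1}}
\]
is always available, and the preceding proposition identifies the one obstruction to its being a genuine right adjoint to $(-)\ast\mathbf{1}$: the decidability map $E+E\boxtimes_{B}E\to E\times_{B}E$ must be invertible, since this is what identifies $Dp\ast\mathbf{1}$ with the kernel pair projection and lets the counit be taken as the kernel pair square.

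For existence in $\mathbf{Poly}\mathcal{E}[\mathcal{W}^{-1}]$, I would invoke the earlier proposition that every decidability map is a dense monomorphism. Given a polynomial in the localization, represented by $\mathcal{E}$-data $(s,p,t)$ via the calculus of right fractions, the formula above applied in $\mathcal{E}$ and then pushed into $\mathcal{E}[\mathcal{W}^{-1}]$ produces a polynomial for which the decidability obstruction has become an isomorphism. The unit/counit construction of the preceding proposition then transports verbatim, yielding the adjunction $(-)\ast\mathbf{1}\dashv D$ on $\mathbf{Poly}\mathcal{E}[\mathcal{W}^{-1}](1,1)$. For preservation, I would use that the localization functor $\mathcal{E}\to\mathcal{E}[\mathcal{W}^{-1}]$ preserves distributivity pullbacks (as established in the proof that the localization is locally cartesian closed), hence preserves the negation operator, the object of distinct pairs $E\boxtimes_{B}E$, and kernel pairs; consequently it sends the $\mathcal{E}$-formula computing $Dp$ to the $\mathcal{E}[\mathcal{W}^{-1}]$-formula for the derivative of the image of $p$, giving preservation when $Dp$ exists in $\mathbf{Poly}\mathcal{E}(1,1)$.

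The main obstacle is verifying that the unit $\eta_{p}$ and counit $\epsilon_{p}$ — both manipulations of distributivity pullbacks and disjoint sums in the base — behave correctly under the span presentation of morphisms in $\mathbf{Poly}\mathcal{E}[\mathcal{W}^{-1}]$. Here I would reduce via the calculus of right fractions and preservation of distributivity pullbacks to the $\mathcal{E}$-level computations already carried out in the preceding proposition, now carried out under the strengthened hypothesis (guaranteed by the localization) that the decidability map is invertible. The triangle identities then reduce to the same disjointness and dpb pastings used there, and both assertions of the theorem follow.
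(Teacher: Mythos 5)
The paper states this theorem without proof, so there is nothing to compare against line by line; your proposal assembles exactly the ingredients the paper has set up for this purpose (the explicit-formula proposition under decidability of $\delta_{p}$, the proposition that decidability maps are dense monos, and local cartesian closedness of $\mathcal{E}\left[\mathcal{W}^{-1}\right]$ with preservation of distributivity pullbacks), and this is evidently the intended argument. Your reduction of an arbitrary polynomial in the localization to one whose middle map lives in $\mathcal{E}$ is also fine, since the dense-mono leg $w$ of the span is invertible in $\mathcal{E}\left[\mathcal{W}^{-1}\right]$ and so induces an isomorphism of polynomials.

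The one point you should not wave through is the behaviour of \emph{coproducts} under the localization. A calculus of right fractions only guarantees that $\mathcal{E}\to\mathcal{E}\left[\mathcal{W}^{-1}\right]$ is left exact; it says nothing about colimits. But your argument needs finite coproducts in $\mathcal{E}\left[\mathcal{W}^{-1}\right]$ twice: once so that the product $\left(-\right)\ast\mathbf{1}$ of polynomials (which forms $E+B$) is defined and computed by the $\mathcal{E}$-level formula, and once so that the decidability map of $\delta_{p}$ in the localization is the image of the decidability map $E+E\boxtimes_{B}E\to E\times_{B}E$ formed in $\mathcal{E}$ (only then does its dense-monicity in $\mathcal{E}$ give invertibility in the localization and hence the identification $Dp\ast\mathbf{1}\cong\pi_{1}$). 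This does hold, but it requires a separate argument: a span out of $X+Y$ with dense-mono left leg $M\to X+Y$ decomposes by extensivity as $M_{X}+M_{Y}$, and each $M_{X}\to X$, $M_{Y}\to Y$ is again a dense mono by pullback stability of the dense class, so maps $X+Y\nrightarrow Z$ correspond to pairs of maps and the localization functor preserves finite coproducts (and likewise one checks the initial object stays strict and sums stay disjoint, so that the hypotheses of the formula proposition are actually satisfied in $\mathcal{E}\left[\mathcal{W}^{-1}\right]$). With that supplied, the rest of your outline — preservation of negation and kernel pairs via preservation of distributivity pullbacks and finite limits, and transport of the unit, counit and triangle identities — goes through as you describe.
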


\section{The $\left(I,J\right)=\left(1,1\right)$ order-$F$ candidate derivatives
of polynomials}

In the previous section we considered the derivative of order $\mathbf{1}\in\mathcal{E}$,
but more generally one may consider the derivative of a general order
$F\in\mathcal{E}$.
\begin{defn}
Let $\mathcal{E}$ be a locally cartesian closed category with disjoint
finite sums. The order $F$ derivative functor $D^{F}\colon\mathbf{Poly}\mathcal{E}\left(1,1\right)\to\mathbf{Poly}\mathcal{E}\left(1,1\right)$
is then defined as the right adjoint to $F$ multiplication, that
is $\left(-\right)\ast F\dashv D\left(-\right)$.
\end{defn}

\begin{rem}
Here we are using that an object $F$ is viewed as the polynomial
\[
\xymatrix{\mathbf{1} & F\ar[r]\ar[l] & \mathbf{1}\ar[r] & \mathbf{1}}
\]
and thus $p\ast F$ is 
\[
\xymatrix{\mathbf{1} & E+F\cdot B\ar[r]\ar[l] & B\ar[r] & \mathbf{1}}
\]
\end{rem}

Whilst it is unclear if this order $F$ derivative will always exist,
what we can say is that the \emph{candidate derivative} does always
exist.
\begin{defn}
Let $\mathcal{E}$ be a locally cartesian closed category with disjoint
finite sums. The order $F$ candidate derivative of a polynomial
\[
\xymatrix{\mathbf{1} & E\ar[r]^{p}\ar[l] & B\ar[r] & \mathbf{1}}
\]
is defined as 
\[
\xymatrix@R=0.5em{ & E\boxtimes_{B}E^{\boxtimes_{B}F}\ar[ld]\myar{\neg\left(\textnormal{ev},\textnormal{id}\right)}{rr} &  & E\times_{B}E^{\boxtimes_{B}F}\myar{\pi_{2}}{rr} &  & E^{\boxtimes_{B}F}\ar[dr]\\
\mathbf{1} &  &  &  &  &  & \mathbf{1}
}
\]
where $E^{\boxtimes_{B}F}$ is defined by the pair of distributivity
pullbacks
\[
\xymatrix{FE\ar@/_{1.2pc}/[rdd]_{\textnormal{id}}\ar[rrr]\ar@{..>}[rd] &  &  & E\ar[lddd]\ar@{..>}[ld]\\
 & FE^{\times_{B}F}\ar[d]_{F\cdot\textnormal{ev}}\ar[r] & E^{\times_{B}F}\ar[dd] &  &  & \mathbf{0}\ar[r]^{!}\ar@{=}[d] & E^{\boxtimes_{B}F}\ar[dd]\\
 & FE\ar[d]\ar@{}[r]|-{\textnormal{dpb}}\ar[d]_{F\cdot p} & \; &  &  & \mathbf{0}\ar[d]_{!}\ar@{}[r]|-{\textnormal{dpb}} & \;\\
 & FB\ar[r] & B &  &  & FE\ar[r] & E^{\times_{B}F}
}
\]
and $\neg\left(\textnormal{ev},\textnormal{id}\right)$ is defined
by the distributivity pullback
\[
\xymatrix{\mathbf{0}\ar[rrrr]^{!}\ar@{=}[d] &  &  &  & E\boxtimes_{B}E^{\boxtimes_{B}F}\ar[dd]^{\neg\left(\textnormal{ev},\textnormal{id}\right)}\\
\mathbf{0}\ar[d]_{!} &  &  &  & \;\\
F\times E^{\boxtimes_{B}F}\ar[rrrr]_{\left(\textnormal{ev},\textnormal{id}\right)} &  &  &  & E\times_{B}E^{\boxtimes_{B}F}
}
\]
using the commutativity of

\[
\xymatrix{ & F\times E^{\times_{B}F}\ar[r]^{\textnormal{ev}}\ar@{..>}[rd] & E\ar[rd]^{p}\\
F\times E^{\boxtimes_{B}F}\ar[ru]\ar[r] & E^{\boxtimes_{B}F}\ar[r] & E^{\times_{B}F}\ar[r] & B
}
\]
\end{defn}

\begin{rem}
We would expect $\left(\textnormal{ev},\textnormal{id}\right)$ to
decidable to have a derivative, but this may not be enough. A possible
extra condition is disjointness on exponentials meaning that we have
pullbacks as on the left below
\[
\xymatrix{ &  &  & FB\ar[rrrr]\ar@/_{1pc}/[rddd]\ar[rd] &  &  &  & B\ar@/^{1pc}/[dddl]\ar[dl]\\
0\ar[rr]\ar[d]\ar@{}[rdr]|-{\textnormal{pb}} &  & B\ar[d]^{\overline{\textnormal{id}_{FB}}} &  & F\left(FB\right)^{F}\ar[rr]\ar[d]\ar@{}[rrdd]|-{\textnormal{dpb}} &  & \left(FB\right)^{F}\ar[dd]\\
FB\ar[rr]_{\textnormal{diag}} &  & \left(FB\right)^{F} &  & F\left(FB\right)\ar[d]\\
 &  &  &  & F\ar[rr] &  & 1
}
\]
where $\left(FB\right)^{F}$ is defined as on the right above. 
\end{rem}

\section{Acknowledgments}

The author thanks the members of Masaryk University Algebra Seminar,
Leeds University Seminar, and Coimbra University Algebra Seminar for
their questions and comments. 

This work was supported by the Operational Programme Research, Development
and Education Project \textquotedblleft Postdoc@MUNI\textquotedblright{}
(No. CZ.02.2.69/0.0/0.0/18\_053/0016952).

\section{Future work}

It remains to investigate if the partial derivatives and total derivatives
can be simply expressed in a similar way to the above. Moreover, if
differentiation (seen as a linear operator) can be viewed as a span,
and if derivatives can be defined more simply as an operation with
this span. It also remains to understand if one has a two dimensional
cartesian differential category structure on polynomials, though this
would likely require generalizing the the theory of cartesian differential
categories \cite{cartesiandifferential} to two dimensions. We may
also consider possible versions of the adjoints to derivatives which
appear in the case of species \cite{adjointderivative}.

Finally, there are possible connections with results in the $\infty$-categorical
literature on left exact localizations of topoi \cite{leftexactloc}
and modalities \cite{modal} to be explored.

\bibliographystyle{plain}
\bibliography{references}

\begin{thebibliography}{10}

\bibitem{McBride}
Michael Abbott, Thorsten Altenkirch, Neil Ghani, and Conor McBride.
\newblock Derivatives of containers.
\newblock In {\em Typed lambda calculi and applications ({V}alencia, 2003)},
  volume 2701 of {\em Lecture Notes in Comput. Sci.}, pages 16--30. Springer,
  Berlin, 2003.

\bibitem{leftexactloc}
Mathieu Anel, Georg Biedermann, Eric Finster, and Andr\'e Joyal.
\newblock Left-exact localizations of $\infty$-topoi ii: Grothendieck
  topologies, 2022.

\bibitem{cartesiandifferential}
R.~F. Blute, J.~R.~B. Cockett, and R.~A.~G. Seely.
\newblock Cartesian differential categories.
\newblock {\em Theory Appl. Categ.}, 22:622--672, 2009.

\bibitem{extensive}
Aurelio Carboni, Stephen Lack, and R.~F.~C. Walters.
\newblock Introduction to extensive and distributive categories.
\newblock {\em J. Pure Appl. Algebra}, 84(2):145--158, 1993.

\bibitem{gambinokock}
Nicola Gambino and Joachim Kock.
\newblock Polynomial functors and polynomial monads.
\newblock {\em Math. Proc. Cambridge Philos. Soc.}, 154(1):153--192, 2013.

\bibitem{doctrinal}
G.~M. Kelly.
\newblock Doctrinal adjunction.
\newblock In {\em Category {S}eminar ({P}roc. {S}em., {S}ydney, 1972/1973)},
  pages 257--280. Lecture Notes in Math., Vol. 420. Springer, Berlin, 1974.

\bibitem{kocknotes}
Joachim Kock.
\newblock Notes on polynomial functors.
\newblock (unpublished), 2009.

\bibitem{martinlof}
Per Martin-L\"of.
\newblock Intuitionistic type theory.
\newblock {\em Studies in Proof Theory}, 1984.

\bibitem{adjointderivative}
Dayanand~S. Rajan.
\newblock The adjoints to the derivative functor on species.
\newblock {\em J. Combin. Theory Ser. A}, 62(1):93--106, 1993.

\bibitem{modal}
Egbert Rijke, Michael Shulman, and Bas Spitters.
\newblock {Modalities in homotopy type theory}.
\newblock {\em arXiv eprint}, 2020.
\newblock Available at \url{https://arxiv.org/abs/1706.07526}, Version 6.

\bibitem{spivakpoly}
David Spivak.
\newblock {A reference for categorical structures on Poly}.
\newblock {\em arXiv eprint}, 2022.
\newblock Available at \url{https://arxiv.org/abs/2202.00534}, Version 2.

\bibitem{glehn}
Tamara von Glehn.
\newblock {\em Polynomials and models of type theory}.
\newblock PhD thesis, University of Cambridge, 2015.

\bibitem{mres}
Charles Walker.
\newblock Local reflections between relations, spans and polynomials.
\newblock $\text{MRes thesis}$, Macquarie University, 2015.

\bibitem{weber}
Mark Weber.
\newblock Polynomials in categories with pullbacks.
\newblock {\em Theory Appl. Categ.}, 30:No. 16, 533--598, 2015.

\end{thebibliography}

\end{document}